\newtheorem{theorem}{Theorem}[section]
\newtheorem{lemma}{Lemma}[section]
\newtheorem{corollary}{Corollary}[section]
\newtheorem{conjecture}{Conjecture}[section]
\newtheorem{conjectureexample}{Example}[section]
\newtheorem{proof}{Proof}[section]
\begin{document}
\begin{frontmatter}
\title{Divisor Goldbach Conjecture and its \\Partition Number}


\author[kun]{Kun Yan\corauthref{a}},
\author[biao]{Hou-Biao Li\corauthref{b}}
\corauth[a]{First author.}
\corauth[b]{Corresponding author.}
\ead{1939810907@qq.com,lihoubiao0189@163.com}
\address[kun]{School of Information and Software Engineering, University of Electronic Science and Technology of China, Chengdu, 610054, P. R. China}
\address[biao]{School of Mathematical Sciences, University of Electronic Science and Technology of
China, Chengdu, 610054, P. R. China}

\begin{abstract}
\quad Based on the Goldbach conjecture and arithmetic fundamental theorem, the Goldbach conjecture was extended to more general situations, i.e., any positive integer can be written as summation of some specific prime numbers, which depends on the divisible factor of this integer, that is:

For any positive integer $n~(n>2)$, if there exists an integer $m$, such that $m|n~( 1 < m < n )$, then $n=\sum_{i=1}^m  p_i $, where $ p_i~(i=1,2,3...m)$ is prime number.

In addition, for more prime summands,  the combinatorial counting is also discussed. For some special cases, some brief proofs are given.

By the use of computer, the preliminary numerical verification was given, there is no an anti-example to be found.
\end{abstract}

\begin{keyword}
Additive number theory; partitions; Goldbach-type theorems; other additive questions involving primes; Integer split; Divisor partition; Goldbach partition; Sequences and sets;
\end{keyword}

\end{frontmatter}


\section{Introduction}
In 1742, Goldbach wrote a famous letter to Euler, in which a conjecture on integer was presented. Euler expressed this conjecture as a more simplified form, i.e., any even number greater than 2 can be expressed as the sum of two primes. This is the well-known Goldbach conjecture\cite{ref1}. In 1855, Desboves verified Goldbach Conjecture for $n<10000$. The recent record is $n<4\times 10^{18}$\cite{verification}. No counter-example has been found to date. Till now, the best theory result is J. R. Chen's 1966 theorem that every sufficiently large integer is the sum of a prime and the product of at most two primes\cite{J.R.Chen}.

A pair of primes $(p,q)$ that sum to an even integer $E=p+q$ are known as a Goldbach partition(Oliveira e Silva). Letting $g(E)$, so-called Goldbach function, denote the number of Goldbach partitions of $E$ without regard to order. For example: \begin{center}100 = 3 + 97 = 11 + 89 = 17 + 83 = 29 + 71 = 41 + 59 = 47 + 53\end{center}
So $g(100)=6$. The Goldbach conjecture is then equivalent to the statement that $g(E)>0$ for every even integer $E>1$. Fig. \ref{Gcomet:fig} is a plot of $g(E)$ , which is obtained by Henry F. Fliegel \& Douglas S. Robertson in 1989\cite{FHR}. Hardy and Littlewood had derived an estimate for $g(E)$ in 1923\cite{hardy_Littlewood}:
\begin{equation}\label{hardy_littlewood_estimate}
g(E)\thicksim \frac{EC}{(\log \frac{E}{2})^2}\prod_{p>2,p|\frac{E}{2}}\frac{p-1}{p-2}.
\end{equation}
Here, $C=\prod_{p>2}(1-1/(p-1)^2)=0.66016......$\\
Some recent work for its lower/upper bounds could be found in \cite{FGAMF}\cite{MCW}. Obviously, it is very complicated.

\begin{figure}[H]
\centerline{\includegraphics[width=5in]{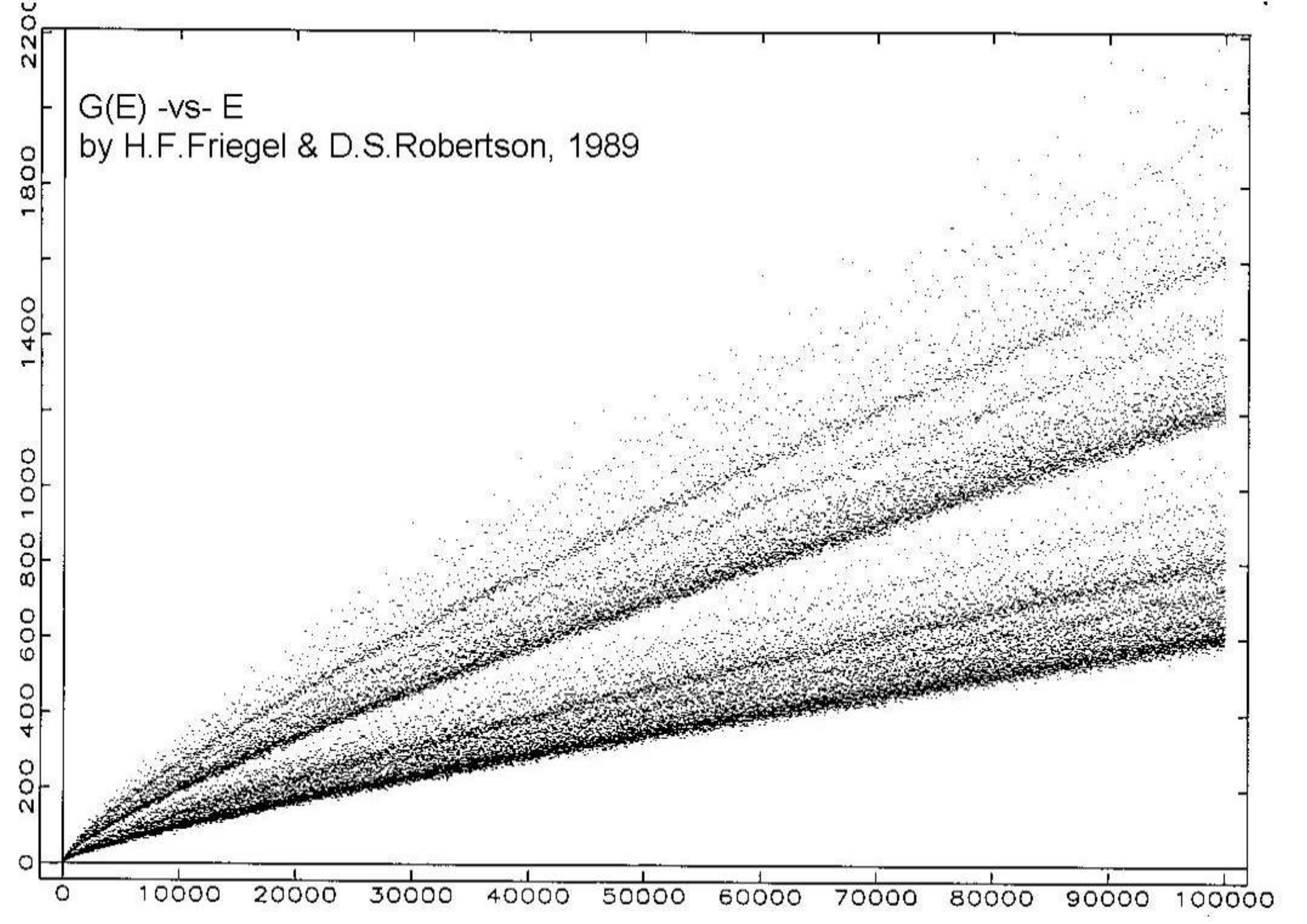}}
\caption{\small The Goldbach comet:the plot of $g(E)$ for $E$ up to 100000.}
\label{Gcomet:fig}
\end{figure}

There are a lot of research on integer splitting, one of the most famous is the work of the Hardy, Godfrey H. and John E. Littlewood. In 1923, they conjectured (as part of their famous Hardy-Littlewood prime tuple conjecture) that for any fixed $c>2$, the number of representations of a large integer $n$ as the sum of $c$ primes $n=p_1+...+p_c$ with $p_1\leq...\leq p_c$ should be asymptotically equal to$$(\prod_p\frac{p\gamma_{c,p(n)}}{(p-1)^c})\int_{2\leq x_1\leq ...\leq x_c:x_1+...+x_c=n}\frac{dx_1 ...dx_c}{\ln x_1 ...\ln x_c},$$ where the product is over all primes $p$, and $\gamma_{c,p}(n)$ is the number of solutions to the equation $n=(q_1+...+q_c)\bmod p$ in modular arithmetic, subject to the constraints $q_1,...,q_c \neq 0\bmod p$ \cite{hardy_Littlewood}. It was known as Hardy-littlewood conjecture.

Note that, the Goldbach conjecture is only for even number. Is there any special for even numbers? Obviously, for the other positive integers, the even number is special since it is divisible by 2. We also know that, in the era of Euclid, it has been proved that the fundamental theorem of arithmetic, the theorem divides any positive integer into two categories, one is prime number, and the other is non-prime, but all non-primes can be expressed as a form of products of some primes. So people hope to find the secret of all positive integer number by studying the prime number! In this paper, we will continue this problem and extend the Goldbach conjecture to a general form, i.e., any positive integer can be written as summation of some specific prime numbers, which depends on the divisible factor of this integer.

\section{Divisor Goldbach conjecture}
Traditionally, we divided a positive integer into odd and even numbers, but through the fundamental theorem of arithmetic we know that every positive integer has a single unique prime factorization\cite{RH}.
\begin{lemma}[Fundamental theorem of arithmetic\cite{ref2}]\label{lemma_Fundamental_Theorem_of_Arithmetic}
Every positive integer $n > 1 $ can be represented in exactly one way as a product of prime powers:
\begin{equation}\label{Fundamental_Theorem_of_Arithmetic}
n=p_1^{\alpha_1}p_2^{\alpha_2}p_3^{\alpha_3}...p_k^{\alpha_k}=\prod_i^kp_i^{\alpha_i},
\end{equation}
where $ p_1<p_2<...<p_k $ are primes and the $ \alpha_i $ are positive integers.
\end{lemma}

So, from the perspective of basic theorem of the arithmetic, the positive integer is simply divided into odd and even is very rough, since every even number has the factor 2 for its prime decomposition, and 2 is just a special case of many factors. Therefore, we can guess for other prime factors there may have similar splittings.
\begin{conjecture}\label{conjecture1}
For every positive integer $n > 1$ and $p$ is a prime factor of $n$,
then $ n $ can be expressed as the sum of $p$ primes , that is $$ n=\sum_{j=1}^{p} p_j, $$ where $ p_j~(j=1,2,...p)$ is prime number.
\end{conjecture}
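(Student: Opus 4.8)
The plan is to reduce the statement to the classical Goldbach conjecture rather than attack it from scratch, since the case $p=2$ is literally Goldbach: when $2\mid n$, writing $n=p_1+p_2$ is exactly the assertion that the even number $n$ is a sum of two primes. I first record the implicit hypothesis that $n$ is \emph{composite} (equivalently $p<n$, as in the $1<m<n$ of the abstract); otherwise, if $n=p$ is prime, its only prime factor is $n$ itself and a sum of $n$ primes is at least $2n>n$, so the statement fails. Thus I write $n=pk$ with $k\ge 2$, so that $n\ge 2p$ throughout.

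The core is the following reduction, which I would isolate as a lemma: \emph{assuming Goldbach's conjecture, for every $t\ge 3$ and every integer $N\ge 2t$, the number $N$ is a sum of exactly $t$ primes.} To prove it I peel off $t-2$ small primes and leave a two-prime target for Goldbach. Choose $b=0$ if $N$ is even and $b=1$ if $N$ is odd, set $a=t-2-b\ge 0$, and take $a$ copies of $2$ together with $b$ copies of $3$. Their sum is $S=2a+3b=2(t-2)+b$, which has the same parity as $N$; hence the remainder $R=N-S$ is even, and the bound $N\ge 2t$ forces $R\ge 4$. Goldbach then gives $R=q_1+q_2$, and altogether $N=\underbrace{2+\cdots+2}_{a}+\underbrace{3+\cdots+3}_{b}+q_1+q_2$ is a sum of $a+b+2=t$ primes. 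Applying this with $t=p\ge 3$ and $N=n\ge 2p$ settles every case with an odd prime factor, while $p=2$ is Goldbach itself.

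I would also point out that the odd part of the conjecture can be made \emph{unconditional} by invoking the ternary Goldbach theorem (Vinogradov, and in full Helfgott): if $n$ is odd then every prime factor $p$ is odd, and peeling off $p-3$ copies of $3$ leaves the odd remainder $R=n-3(p-3)=n-3p+9\ge 9\ge 7$ (using $n=pk\ge 3p$, since $k=n/p$ must be odd and hence $\ge 3$), which is a sum of three primes. This yields $p$ primes in total with no appeal to the binary conjecture.

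The main obstacle is therefore unavoidable and lies entirely in the even case: the reduction always terminates at a single two-prime Goldbach instance, and the base case $p=2$ \emph{is} the strong Goldbach conjecture, which remains open. Consequently I expect the honest outcome to be a theorem conditional on Goldbach (unconditional only for odd $n$), together with a finite hand-check of the small boundary values $n=2p$ to confirm $R\ge 4$ and the admissibility of the chosen $a,b$.
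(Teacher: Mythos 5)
The paper offers no proof of this statement: it is posed purely as a conjecture, supported by worked examples and computer verification, and the only results actually proved (the theorems of Section~3) concern the companion divisor conjecture in the special cases where the quotient $n/m$ equals $2$ or a prime, settled by the trivial partition of $n=mp$ into $m$ copies of $p$. Your proposal therefore goes genuinely beyond what the paper establishes, and it is correct as far as it claims to go. The observation that the statement must implicitly restrict to composite $n$ is right (for prime $n=p$ a sum of $p$ primes is at least $2p>p$); the peeling lemma is sound, since choosing $b\in\{0,1\}$ to match the parity of $N$ makes $R=N-2(t-2)-b$ even and at least $4$ whenever $N\ge 2t$, and $n=pk$ with $k\ge 2$ guarantees $n\ge 2p$; and the unconditional treatment of odd $n$ via the ternary Goldbach theorem (where $k$ is odd, hence $k\ge 3$ and $R=n-3(p-3)\ge 9$) is a genuine strengthening that the paper does not possess. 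Note that the $t=5$, $N=15$ instance of your lemma even reproduces the paper's own example $15=2+2+3+3+5$. The one point to keep sharp is that this is a \emph{conditional} theorem rather than a proof of the conjecture: the case $p=2$ is literally the binary Goldbach conjecture, so no reduction of this kind can close the problem; you state this honestly, and the resulting statement (``divisor Goldbach for odd prime factors follows from Goldbach, and holds unconditionally for odd $n$'') would be a worthwhile addition to the paper in place of pure numerical evidence.
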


\begin{conjectureexample}
~
\begin{enumerate}
\item $15=3\times 5$  ~has prime factor 3 and 5.

\begin{list}{}{}
\item 15=3+5+7 \hfill(the sum of 3 primes)
\item 15=2+2+3+3+5 \hfill(the sum of 5 primes)
\end{list}
\item $36=2^2\times 3^2$ ~has prime factor 2 and 3.
\begin{list}{}{}
\item 36=7+29 \hfill(the sum of 2 primes)
\item 36=2+11+23 \hfill(the sum of 3 primes)
\end{list}
\item $56=2^3\times 7$ ~has prime factor 2 and 7.
\begin{list}{}{}
\item 56=13+43 \hfill(the sum of 2 primes)
\item 56=2+3+3+5+7+13+23 \hfill(the sum of 7 primes)
\end{list}
\end{enumerate}
\end{conjectureexample}

Conjecture \ref{conjecture1} extends Goldbach's conjecture to any prime factor, what will happen if these prime factors are different or the same?
\begin{conjecture}\label{conjectureEnd}
For every positive integer $n > 2$ and if there exists a positive integer $m~(1<m<n)$, such that $m|n$, then $n$ can be expressed as the sum of $m$ primes, that is,
$$ n = \sum_{i=1}^m p_i,$$
where $p_i~(i=1,2,...,m)$ is primes.
\end{conjecture}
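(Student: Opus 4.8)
The plan is to show that Conjecture \ref{conjectureEnd} is in fact \emph{equivalent} to the ordinary (binary) Goldbach conjecture, rather than a genuinely stronger statement, and to isolate whatever unconditional fragments survive. One direction is immediate: taking $m=2$ (a legitimate divisor whenever $n$ is even with $n>2$, since then $2\mid n$ and $1<2<n$) the claim reads $n=p_1+p_2$, which is exactly Goldbach's assertion that every even $n\geq 4$ is a sum of two primes. Hence the $m=2$ instances already contain Goldbach, so the conjecture can be no easier. The substance of the argument is therefore the reverse implication: assuming Goldbach, I would deduce the representation $n=\sum_{i=1}^m p_i$ for every divisor $m$ of $n$ with $1<m<n$.

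For the reverse direction I would fix such an $m$ and, when $m\geq 3$, reserve two summands to play the role of a Goldbach pair while filling the remaining $m-2$ slots with small primes drawn from $\{2,3\}$. Concretely, write $a$ of them equal to $2$ and $b$ equal to $3$ with $a+b=m-2$, so that the residual $R=n-2a-3b$ must be realized as a sum of two primes. Since $R\equiv n-b\pmod 2$, the residual is even precisely when $b\equiv n\pmod 2$, a parity I can always arrange by choosing $b\in\{0,1\}$ (this is where $m\geq 3$ is used). It then remains to check $R\geq 4$ so that Goldbach applies, and here the divisibility hypothesis enters. Writing $n=mk$ with $k=n/m\geq 2$, the even case ($b=0$) gives $R=n-2(m-2)=m(k-2)+4\geq 4$, while the odd case ($n$ odd forces $m,k$ odd, hence $k\geq 3$, and we take $b=1$) gives $R=n-2(m-3)-3=m(k-2)+3\geq m+3\geq 4$. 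Invoking Goldbach on the even residual $R$ completes the representation, and the $m=2$ case is Goldbach itself, so the two statements are equivalent.

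Before relying on an unproven conjecture, I would isolate the cases that are unconditional. The two extreme divisors are trivial: if $k=n/m=2$ then $n=\underbrace{2+\cdots+2}_{m}$, and if $k=3$ then $n=\underbrace{3+\cdots+3}_{m}$, each a sum of exactly $m$ primes with no appeal to Goldbach. The case $m=3$ with $n$ odd follows from the now-proven ternary Goldbach theorem (Vinogradov--Helfgott): every odd $n\geq 7$ is a sum of three primes. A little more can be extracted by solving small linear systems over a fixed prime set --- for instance $n=4m$ with $m$ even equals $\frac{m}{2}\cdot 3+\frac{m}{2}\cdot 5$ --- but such tricks dispose only of sparse families.

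The main obstacle is exactly the one inherited from classical additive number theory: the reduction above is elementary and rigorous, yet its final step leans on the binary Goldbach conjecture, which remains open. Consequently a fully unconditional proof of Conjecture \ref{conjectureEnd} is out of reach with present methods, and indeed cannot be easier than Goldbach, since the two are logically equivalent. The realistic deliverables are therefore (i) the equivalence and the conditional reduction, (ii) the unconditional special cases above, and (iii) the numerical verification reported below, which plays the same evidential role here that the computations of Desboves and of Oliveira e Silva play for Goldbach itself.
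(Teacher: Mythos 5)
The paper offers no proof of this statement: it is posed as a conjecture, and the authors prove unconditionally only the special cases $n/m=2$ and $n/m=p$ prime (their Theorems 3.1 and 3.2, both via the trivial constant representation $n=k+k+\cdots+k$ with $k=n/m$), supplemented by computer verification. Your reduction is therefore not a restatement of anything in the paper but a genuinely different, and considerably more informative, contribution. I checked the details and they are sound: for $m\geq 3$ the choice $b\in\{0,1\}$ with $b\equiv n\pmod 2$ makes $R=n-2a-3b$ even, and writing $n=mk$ gives $R=m(k-2)+4\geq 4$ when $n$ is even and $R=m(k-2)+3\geq 6$ when $n$ is odd (where $m,k\geq 3$ are forced), so the binary Goldbach conjecture applies to $R$ and yields a representation by exactly $a+b+2=m$ primes; conversely the $m=2$ instances are literally Goldbach. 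The upshot is that the ``divisor Goldbach conjecture'' is logically equivalent to the ordinary Goldbach conjecture, which both explains why no counterexample appears in the numerical experiments and shows the generalization carries no additional arithmetic content beyond the binary case. Your unconditional fragments ($k=2$, $k=3$, more generally $k$ prime via $n=k+\cdots+k$, and $m=3$ with $n$ odd via Helfgott's ternary theorem) strictly contain the cases the paper actually proves. The only caveat, which you state yourself, is that this is a conditional proof and an equivalence, not an unconditional resolution --- but that is the correct and honest assessment of what can be delivered here.
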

\begin{conjectureexample}\label{exampleEnd}
~
\begin{enumerate}
\item $36=2^3\times3^2$, ~$4|36$ and $6|36$.
\begin{list}{}{}
\item 36=3+5+11+17 \hfill(the sum of 4 primes)
\item 36=2+2+5+7+7+13 \hfill(the sum of $6$ primes)
\end{list}
\item $56=2^3\times 7$, ~$8|56$ and $14|56$.
\begin{list}{}{}
\item 56=2+2+5+5+7+11+11+13 \hfill (the sum of 8 primes)
\item 56=2+2+2+2+2+2+2+2+3+3+5+7+11+11 (the sum of $14$ primes)
\end{list}
\end{enumerate}
\end{conjectureexample}

Obviously this conjecture can be seen as a generalized form of Goldbach Conjecture, therefore we named it \textbf{divisor Goldbach conjcture}. The proof of this conjecture may be difficult, we mainly give some proofs in some special cases and some empirical verification by using computers. The corresponding program code has been put in Appendix I (C language), the numerical results can also be found in Appendix II.

As mentioned in the  previous section, the hardy-littlewood conjecture \cite{hardy_Littlewood} can also be seen as the goldbach conjecture 's promotion. But the above divisor Goldbach conjecture differs the hardy-littlewood conjecture since the divisor Goldbach conjecture does not require the integer large enough and its splitting number is just its divisor, which is more similar to Goldbach conjecture. However, its splitting form is not unique for any integer, which is the same as hardy-littlewood conjecture. Next, we will continue researching this problem.

\section{On the combinatorial counting problem}
As we can see from the Example \ref{exampleEnd}, $4|36$ and $36=3+5+11+17$, we denote primes sequence $(3,5,11,17)_4$ be a \textbf{divisor Goldbach partition} of $(4,36)$ without regard to order. Obviously, the divisor Goldbach partition is not unique. By using computer we know (Appendix \uppercase\expandafter{\romannumeral 2})
\begin{center}
  36=2+2+3+29=2+2+13+19=3+3+7+23=3+3+11+19=3+3+13+17=...
\end{center}
So, for any integer number $m$ and $n$, if $m|n$ and $n=p_1+...+p_m$, where $p_i(i=1,2,...,m)$ is prime with $p_1\leq...\leq p_m $, then we denote the primes sequence $(p_1,...,p_m)_m$ be a divisor Goldbach partition of $(m,n)$, and all the divisor Goldbach partitions of $(m,n)$ consist of the following set $S_{m|n}$,
\begin{equation}
S_{m|n}=\{(p_1,p_2,...,p_m)_m|\sum_{i=1}^m p_i = n, p_1\leq p_2\leq ...\leq p_m,p_i~is~prime\}.
\end{equation}
Next, we define a function $Y(m,n)$ which means the number of divisor Goldbach partitions of $(m,n)$ with $m|n$. Obviously,
\begin{equation}
Y(m,n)=Card (S_{m|n}).
\end{equation}
The divisor Goldbach conjecture is then equivalent to the statement that $Y(m,n)>0$ for every positive integer pair $(m,n)$ with $m|n$ and $1<m<n$. So, $Y(4,36)=15$ since 36 can be expressed as the sum of 4 primes in 15 different ways(see Appendix \uppercase\expandafter{\romannumeral 2}). By the way, if $m\equiv 2$ in function $Y(m,n)$, it was just the Goldbach function
\begin{equation}\label{g(E)}
g(E)=Y(2,n),~ (2|n).
\end{equation}
By the way, if the condition $m|n$ is not necessary in function $Y(m,n)$, the function $Y(m,n)$ asymptotically equal to\cite{hardy_Littlewood}
$$(\prod_p\frac{p\gamma_{m,p(n)}}{(p-1)^m})\int_{2\leq x_1\leq ...\leq x_m:x_1+...+x_m=n}\frac{dx_1 ...dx_m}{\ln x_1 ...\ln x_m},$$
however, it requires the positive integer $n$ is sufficiently large. In this paper, we mainly study the function $Y(m,n)$ under the condition $m|n$ with any integer $n$. Next, we proved a part of divisor Goldbach conjecture and got some algebraic ralationship of function $Y(m,n)$.
\begin{theorem}\label{T_p_is_2}
For any positive integer $n > 1$ and if there exists a positive integer $m~(1<m<n)$, such that $n/m=2$, then $n$ can be expressed as the sum of $m$ primes and $Y(m,2m)\equiv 1$.
\end{theorem}
\begin{proof}
If $n/m=2$, obviously the set $S_{m|n}$ has an element $s=(p_1,p_2,...,p_m)$ with $p_i~(i=1,2,...,m)=2$. So, in this case the conjecture \ref{conjectureEnd} is correct. Next, we prove $Y(m,2m)\equiv 1$. Since $ Y(m,2m)=Card(S_{m|2m})$, we have to prove $Card(S_{m|2m}) \equiv 1$,
equivalently, the set $S_{m|2m}$ has only one element $s=(2,2,...,2)_m$.

Now, we prove this problem by contradiction. Assume $Card(S_{m|2\cdot m})>1$, since $s=(2,2,...,2)_m\in S_{m|2\cdot m}$,
so there exists another element $s_1=(p_1,p_2,...,p_m)_m\in S_{m|2m}~~ and~~ s\neq s_1.$ Note that $ \sum(s_1) = 2m$, since $(\sum s_1)/m=(\sum_{i=1}^m p_i)/m=2.$ Therefore, in the element $s_1$, there must be some primes larger than 2,and other primes less than 2,
since 2 is the least prime and there do not exist primes that are less than 2, therefore $$s_1 \notin S_{m|2m}.$$
So there does not exist an element $s_1=(p_1,p_2,...,p_m)_m\in S_{m|2m}$ with $s\neq s_1$.
So, the assumption does not hold. Note that $ Card(S_{m|2m})\equiv 1$. So
\begin{equation}\label{2m=1}Y(m,2m)\equiv 1.\end{equation}
\qed
\end{proof}

\begin{theorem}\label{T_p_is_p}
For any positive integer $n > 1$ and if there exists a positive integer $m~(1<m<n)$, such that $n/m=p$, where $p$ is a prime number, then $n$ can be expressed as the sum of $m$ primes. And $Y(m,p\cdot m)\geq 1$.
\end{theorem}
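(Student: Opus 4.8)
The plan is to mirror the constructive argument of Theorem \ref{T_p_is_2} by exhibiting one explicit witness in $S_{m|n}$. Since the hypothesis $n/m = p$ gives $n = pm$, the obvious candidate is the uniform sequence built from the single prime $p$.

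First I would write down the sequence $s = (p, p, \ldots, p)_m$ consisting of $m$ copies of $p$ and verify that it belongs to $S_{m|n}$: each coordinate is the prime $p$, the ordering $p \leq p \leq \cdots \leq p$ is (weakly) increasing, and the sum equals $\sum_{i=1}^m p = mp = n$. This already settles the first assertion, namely that $n = pm$ is a sum of $m$ primes, confirming the corresponding instance of Conjecture \ref{conjectureEnd}. Since $Y(m, pm) = Card(S_{m|pm})$ and this set contains at least $s$, the bound $Y(m, pm) \geq 1$ follows immediately.

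There is no real obstacle to the existence claim—the uniform partition is an instant witness, generalizing the all-$2$'s partition used when $p = 2$. The only subtlety worth flagging is why the conclusion weakens from the exact value $Y \equiv 1$ of Theorem \ref{T_p_is_2} to the mere inequality $Y \geq 1$. When $p = 2$ the coordinates are forced, because $2$ is the smallest prime and cannot be compensated by anything smaller; for an odd prime $p$ this rigidity vanishes and extra representations generally appear. For example, taking $n = 15$, $m = 3$, $p = 5$, both $(5,5,5)_3$ and $(3,5,7)_3$ lie in $S_{3|15}$, so $Y(3,15) \geq 2$. Hence one should not expect the counting refinement of Theorem \ref{T_p_is_2} to persist, and $\geq 1$ is the strongest uniform statement that the uniform-partition method yields.
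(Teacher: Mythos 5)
Your proposal is correct and matches the paper's own argument exactly: both exhibit the uniform sequence $s=(p,p,\ldots,p)_m$ as an explicit element of $S_{m|p\cdot m}$ with $\sum s = mp = n$, which gives the representation and the bound $Y(m,p\cdot m)\geq 1$ at once. Your added remark on why the exact count of Theorem \ref{T_p_is_2} does not carry over to odd $p$ is a sensible observation but not part of the paper's proof.
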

\begin{proof}
Since $n/m=p$, and $p$ is prime number, let $s=(p,p,...,p)_m,$ obviously $\sum s = m\cdot p=n.$ Note that the element $$s=(p,p,...,p)_m\in S_{m|p\cdot m}.$$
So, the set $S_{m|p\cdot m}$ at least have one element. So \begin{equation}\label{pmgeq}Y(m,p\cdot m)\geq 1.\end{equation}
\qed
\end{proof}

We have proved Theorem \ref{T_p_is_p}, when $n/m=p$, where $p$ is prime number. Next, for convenience, we denote $S^{p(z)}_{m|n}$ as a subset of $S_{m|n}$, where\begin{equation}\label{Sp(z)}S^{p(z)}_{m|n}=\{(p_1,...,p_{k+1},...,p_{k+z},...,p_m)_m|p_k<p_{k+1}=p_{k+z}=p<p_{k+z+1}\}.\end{equation}
Obviously, \begin{equation}\label{cap_and_cup}S^{p(z_1)}_{m|n}\cap S^{p(z_2)}_{m|n}=\emptyset~(z_1\neq z_2),~and~\cup_{z=0}^m S^{p(z)}_{m|n}=S_{m|n}.\end{equation}
Let $A$ be a finite set and its element is integer, and the maximum element of set A, $Max(A)\leq m$, then $S^{p(A)}_{m|n}=\cup_{z\in A}S^{p(z)}_{m|n}$, also we denote
\begin{equation}Y_{p(z)}(m,m\cdot p)=Card(S^{p(z)}_{m|n}),~and~Y_{p(A)}(m,m\cdot p)=Card(S^{p(A)}_{m|n}).\end{equation} Obviously, we can derive \begin{equation}Y_{p(m)}\label{Yp_special}(m,m\cdot p)=1,~and~Y_{p(m-1)}(m,m\cdot p)=0~(m>1).\end{equation}
Clearly, \begin{equation}\label{Y=sum1}Y(m,m\cdot p)=\sum_{i=0}^{m}Y_{p(i)}(m,m\cdot p).\end{equation}
When $n/m=p$ and $(n+p)/(m+1)=p$, where $p$ is prime number, we can got the set $S_{m|n}$ and the set $S_{(m+1)|(n+p)}$, and define a function $f$
$$f:S_{m|n}\longrightarrow S_{(m+1)|(n+p)} - S_{(m+1)|(n+p)}^{p(0)},$$
Let $s_m\in S_{m|n}^{p(z)}$, $s_{m+1}\in S_{(m+1)|(m+1)\cdot p}^{p(z+1)}$ with \\
\begin{scriptsize}
\xymatrix{
s_m = &(p_1,\ar[d]^=&p_2,\ar[d]^=&...p_{k+1},&...p_{k+z},&...p_m\ar[dr]^=&)_m\\
s_{m+1} = &(p_1,&p_2,&...p_{k+1},&...p_{k+z},&p_{k+z+1},&...p_{m+1}&)_{m+1}
}
\end{scriptsize}\\
the function $f$ is defined as $$f: s_m \longmapsto s_{m+1} = f(s_m).$$
So, for any element $s\in S_{m|n}^{p(z)}$, we can easy derive $f(s)\in S_{(m+1)|(m+1)\cdot p}^{p(z+1)}$, note that
\begin{equation}\label{m_subset_m1}f(S_{m|m\cdot p}^{p(z)})\subset S^{p(z+1)}_{(m+1)|(m+1)\cdot p}.\end{equation}
So, we can easy derive that the codomain of $f$ \begin{equation}\label{Z_f}Z_f\subset S_{(m+1)|(m+1)\cdot p}-S_{(m+1)|(m+1)\cdot p}^{p(0)}.\end{equation}
On the other hand, for any element $s, s'\in S_{m|n}$, and $s\neq s'$, clearly $f(s)\neq f(s')$.
So, for function $f$, its inverse function $f^{-1}$ exists when $D_{f^{-1}}=Z_f$. As mentioned in above, $s_{m+1}=f(s_m)$, so, $s_m=f^{-1}(s_{m+1})$. And we can easily derive that for any element $s_{m+1}\in S_{(m+1)|(m+1)\cdot p}^{p(z+1)}~(z\geq 0)$, $\sum{(f^{-1}(s_{m+1}))}=m\cdot p$, note that
\begin{equation}\label{m1_subset_m}f^{-1}(S_{(m+1)|(m+1)\cdot p}^{p(z+1)})\subset S_{m|m\cdot p}^{p(z)}.\end{equation}
So, according to Eq. (\ref{m_subset_m1}) and Eq. (\ref{m1_subset_m}), we can get
\begin{equation}\label{f(S_m)=S(m+1)}f(S_{m|m\cdot p}^{p(z)}) = S^{p(z+1)}_{(m+1)|(m+1)\cdot p},~and~f^{-1}(S_{(m+1)|(m+1)\cdot p}^{p(z+1)})= S_{m|m\cdot p}^{p(z)}.\end{equation}
\begin{theorem}\label{1=2}
For any positive integer $n > 1$ and if there exists a positive integer $m~(1<m<n)$, such that $\frac{n}{m}=p$, and $p$ is prime, then $n$ can be expressed as the sum of $m$ primes. And the function $Y(m,n)$ satisfies
\begin{equation}Y_{p(z)}(m,m\cdot p)=Y_{p(z+1)}(m+1,(m+1)\cdot p),~(z\geq 0, z\in Z).\end{equation}
\end{theorem}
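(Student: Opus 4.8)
The existence statement is already in hand: since $n/m = p$ with $p$ prime, Theorem \ref{T_p_is_p} guarantees $s = (p,p,\ldots,p)_m \in S_{m|m\cdot p}$, so $n = m\cdot p$ is indeed expressible as a sum of $m$ primes. The substance of the theorem is therefore the cardinality identity, and my plan is to read it off directly from the insertion map $f$ constructed immediately above the statement.

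First I would make precise that $f$, restricted to the single stratum $S^{p(z)}_{m|m\cdot p}$, is a bijection onto $S^{p(z+1)}_{(m+1)|(m+1)\cdot p}$. The map takes a sorted partition with exactly $z$ copies of $p$ and inserts one further $p$ into the block of $p$'s, producing a sorted partition of $(m+1)p$ with exactly $z+1$ copies of $p$; its candidate inverse deletes one $p$ from that block. Injectivity of $f$ on all of $S_{m|m\cdot p}$ has already been noted, so injectivity on the stratum is free. For surjectivity onto the target stratum I would combine the two set inclusions already established, namely $f(S^{p(z)}_{m|m\cdot p}) \subset S^{p(z+1)}_{(m+1)|(m+1)\cdot p}$ from Eq. (\ref{m_subset_m1}) and $f^{-1}(S^{p(z+1)}_{(m+1)|(m+1)\cdot p}) \subset S^{p(z)}_{m|m\cdot p}$ from Eq. (\ref{m1_subset_m}); together these force the equality of images recorded in Eq. (\ref{f(S_m)=S(m+1)}).

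With $f$ a bijection between the two strata, the cardinality identity is immediate, since a bijection preserves the number of elements:
\begin{equation}
Y_{p(z)}(m,m\cdot p) = Card(S^{p(z)}_{m|m\cdot p}) = Card(S^{p(z+1)}_{(m+1)|(m+1)\cdot p}) = Y_{p(z+1)}(m+1,(m+1)\cdot p).
\end{equation}

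The only genuinely delicate point — the place where I would spend care rather than routine symbol-pushing — is checking that $f$ and its inverse are well-defined as maps between the \emph{correct} strata. Concretely, one must verify that inserting a $p$ respects the non-decreasing ordering and raises the multiplicity of $p$ from exactly $z$ to exactly $z+1$, so that the image really lands in the $(z+1)$-stratum and not in a neighbouring one; and dually that every element of $S^{p(z+1)}_{(m+1)|(m+1)\cdot p}$ contains at least one $p$ to delete, which holds since $z+1 \ge 1$, with the deletion dropping the sum by exactly $p$ to $m\cdot p$. Once this bookkeeping on the block of $p$'s is pinned down, the rest is the formal bijection-preserves-cardinality argument above.
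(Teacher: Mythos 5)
Your proposal is correct and follows essentially the same route as the paper: both rest on the insertion map $f$ and the stratum-level equality $f(S_{m|m\cdot p}^{p(z)}) = S^{p(z+1)}_{(m+1)|(m+1)\cdot p}$ of Eq. (\ref{f(S_m)=S(m+1)}), from which the cardinality identity is immediate. You merely spell out the bijectivity and well-definedness checks (injectivity on the stratum, existence of a $p$ to delete since $z+1\geq 1$) that the paper leaves implicit.
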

\begin{proof}
When $m|m\cdot p$ and $(m+1)|(m+1)\cdot p$, for the set $S^{p(z_0)}_{m|m\cdot p}$ and the set $S^{p(z_0+1)}_{(m+1)|(m+1)\cdot p}$, according to equation (\ref{f(S_m)=S(m+1)}), we can easily derive that
\begin{equation}Card(S^{p(z_0)}_{m|m\cdot p})=Card(S^{p(z_0+1)}_{(m+1)|(m+1)\cdot p}),\end{equation}
so
$$Y_{p(z)}(m,m\cdot p)=Y_{p(z+1)}(m+1,(m+1)\cdot p).$$
\qed
\end{proof}

According to Theorem \ref{1=2}, equation (\ref{Y=sum1}) and equation (\ref{Yp_special}) we can get a diagram as belows:\\~\\
\begin{scriptsize}
\xymatrix{
Y(2,2p)=Y_{p(0)}(2,2p)\ar[dr]^{=}&+Y_{p(2)}(2,2p)\ar[dr]^{=}\\
Y(3,3p)=Y_{p(0)}(3,3p)\ar[dr]^{=}&+Y_{p(1)}(3,3p)\ar[dr]^{=}&+Y_{p(3)}(3,3p)\ar[dr]^{=}\\
Y(4,4p)=Y_{p(0)}(4,4p)\ar[dr]^{=}&+Y_{p(1)}(4,4p)\ar[dr]^{=}&+Y_{p(2)}(4,4p)\ar[dr]^{=}&+Y_{p(4)}(4,4p)\ar[dr]^{=}\\
Y(5,5p)=Y_{p(0)}(5,5p)&+Y_{p(1)}(5,5p)&+Y_{p(2)}(5,5p)&+Y_{p(3)}(5,5p)&+Y_{p(5)}(5,5p)\\
 &\textbf{......}
 }
\end{scriptsize}\\
Adding the results we have got from the diagram, we can easily derive the result as bellow:

\begin{corollary}\label{G_m(mp)}
For any positive integer $n > 1$ and if there exists a positive integer $m~(1<m<n)$, such that $\frac{n}{m}=p$, and $p$ is prime, then $n$ can be expressed as the sum of $m$ primes. And
\begin{equation}Y(m,m\cdot p)=\sum_{j=2}^{m}Y_{p(0)}(j,j\cdot p)+1.\end{equation}
\end{corollary}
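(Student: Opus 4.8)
The plan is to run a telescoping argument assembled from the three facts already in hand: the decomposition $Y(m,m\cdot p)=\sum_{i=0}^{m}Y_{p(i)}(m,m\cdot p)$ of Eq.~(\ref{Y=sum1}), the boundary values $Y_{p(m)}(m,m\cdot p)=1$ and $Y_{p(m-1)}(m,m\cdot p)=0$ of Eq.~(\ref{Yp_special}), and the index-shift identity $Y_{p(z)}(m,m\cdot p)=Y_{p(z+1)}(m+1,(m+1)\cdot p)$ of Theorem~\ref{1=2}. First I would split off the two boundary terms in the decomposition,
$$Y(m,m\cdot p)=Y_{p(0)}(m,m\cdot p)+\sum_{i=1}^{m-2}Y_{p(i)}(m,m\cdot p)+Y_{p(m-1)}(m,m\cdot p)+Y_{p(m)}(m,m\cdot p),$$
so that Eq.~(\ref{Yp_special}) makes the $i=m-1$ term vanish and the $i=m$ term contribute exactly the additive constant $1$.

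The core step is to collapse each interior term. Read backwards, Theorem~\ref{1=2} says $Y_{p(i)}(m,m\cdot p)=Y_{p(i-1)}(m-1,(m-1)\cdot p)$ for every $i\geq 1$; iterating this descent $i$ times lowers the second index to $0$ and yields $Y_{p(i)}(m,m\cdot p)=Y_{p(0)}(m-i,(m-i)\cdot p)$. Applying it for $1\leq i\leq m-2$ and re-indexing by $j=m-i$ turns the interior sum into $\sum_{j=2}^{m-1}Y_{p(0)}(j,j\cdot p)$. Adding the surviving $i=0$ term $Y_{p(0)}(m,m\cdot p)$ and the constant $1$ then gives $Y(m,m\cdot p)=\sum_{j=2}^{m}Y_{p(0)}(j,j\cdot p)+1$, which is precisely the assertion, and is exactly the ``summing the diagonals'' that the displayed triangular diagram is meant to picture.

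An equivalent and perhaps cleaner packaging is induction on $m$. The same manipulation produces the one-step recursion $Y(m+1,(m+1)\cdot p)=Y(m,m\cdot p)+Y_{p(0)}(m+1,(m+1)\cdot p)$, whose base case $Y(2,2\cdot p)=Y_{p(0)}(2,2\cdot p)+1$ is immediate from Eqs.~(\ref{Y=sum1}) and~(\ref{Yp_special}); telescoping the recursion from $2$ up to $m$ reproduces the stated closed form.

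The step that needs the most care is the index bookkeeping in the iterated descent, rather than any genuine difficulty. I must verify that every invocation of Theorem~\ref{1=2} respects its hypothesis $z\geq 0$, which holds because the descent stops at the first-column term $Y_{p(0)}$ and never drives the lower index negative, and that the zero term $Y_{p(m-1)}=0$ is excised without duplicating or dropping a summand, so that the single unit from $Y_{p(m)}=1$ remains the unique source of the trailing $+1$. Once these endpoints are pinned down the collapse is forced and no estimation is involved.
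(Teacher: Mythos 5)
Your argument is correct and is essentially the paper's own: the paper justifies the corollary by drawing the triangular diagram of the decompositions from Eq.~(\ref{Y=sum1}), dropping the vanishing $Y_{p(m-1)}$ terms and keeping the single $Y_{p(m)}=1$ via Eq.~(\ref{Yp_special}), and chaining the diagonal equalities of Theorem~\ref{1=2} before ``adding the results,'' which is exactly the telescoping/induction you describe. Your write-up is actually more careful than the paper's, since you explicitly check the index range $1\leq i\leq m-2$ and the hypothesis $z\geq 0$ in each application of Theorem~\ref{1=2}, but the underlying route is identical.
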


\section{Some empirical verification results of $Y(m,n)$}
In the previous section, we present divisor Goldbach conjecture as a generalized form of Goldbach's conjecture, and reseached the conjecture in a few special cases. Next, we use computer to discover some very interesting rules on function $Y(m,n)$.

In Fig. \ref{m_d_n}, n -axis means integer number $n>2$, and m -axis means divisor of $n$, the point means $m|n$, all points consists of a set $D$ (the figure only shows the part result of this set), which is the domain of the function $Y(m,n)$,
\begin{equation}D=\{(m,n)|m\in N,n\in N,n>1,m>1,m|n\}.\end{equation}
By the way, the Fig. \ref{m_d_n} is a kind of Sieve of Eratosthenes\cite{The_Sieve_of_Eratosthenes}. Because the prime set $P$ satisfies
\begin{equation}P=\{n|Not~exists~integer~number~m>1~let~(m,n)\in D, n\in N\}.\end{equation}
As we can see from Fig. \ref{m_d_n}, each point on a line. Note that
\begin{equation}n=k\cdot m ~ (k=2,3,...).\end{equation}
When $k=2$, we have proved $Y(m,2m)\equiv 1$ (see Eq. (\ref{2m=1})). And proved $Y(m,k\cdot m)\geq 1$ (se Eq. (\ref{pmgeq})) when $k$ is prime.

\begin{figure}[H]
\centerline{\includegraphics[width=7in]{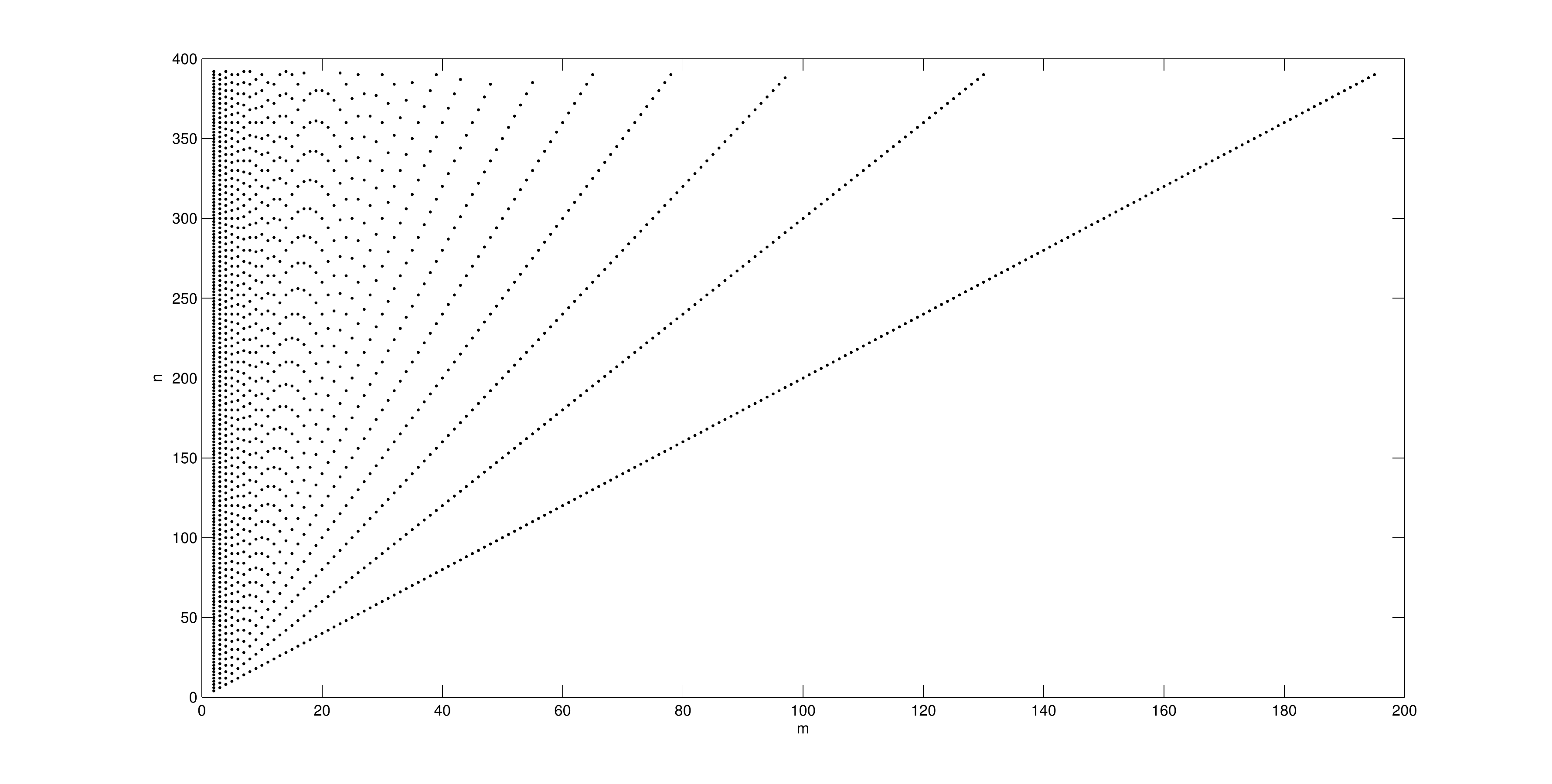}}
  \begin{picture}(0,0)
    \put(200,134){$k=2$}
  \end{picture}
    \begin{picture}(0,0)
    \put(176,167){$k=3$}
  \end{picture}
    \begin{picture}(0,0)
    \put(157,196){$k=4$}
  \end{picture}
\caption{\small The set $D$: domain of function $Y(m,n)$.}
\label{m_d_n}
\end{figure}
 The Fig. \ref{num} has plotted the results of $Y(m,n)$ by some numerical experimentations. According to Eq. (\ref{g(E)}), when $m\equiv 2$, the function $Y(2,2m)$ is Goldbach function $g(E)$ with $E$ is even number. Note that in the Fig. \ref{num} fixed $m=2$ then we can get the plot of function $g(2m)$, so-called Goldbach function, and it was shown as Fig. \ref{Gcomet:fig}. The Fig. \ref{diff_m} shows the value of $Y(m,n)$ for different $m$ values, as we know, the first subfigure is Goldbach comet (see Fig. \ref{Gcomet:fig}).  However, it looks very complicated. In addition, as we can see from Fig. \ref{num} , when we fixed the quotient of $n/m$ (let $k\geq 2$), then the $Y(m,m\cdot k)$ may has a good regular pattern with argument $m$. The Fig. \ref{diff_k} shows the function $Y(m,k\cdot m)$.

 \begin{figure}[H]
\centerline{\includegraphics[width=7in]{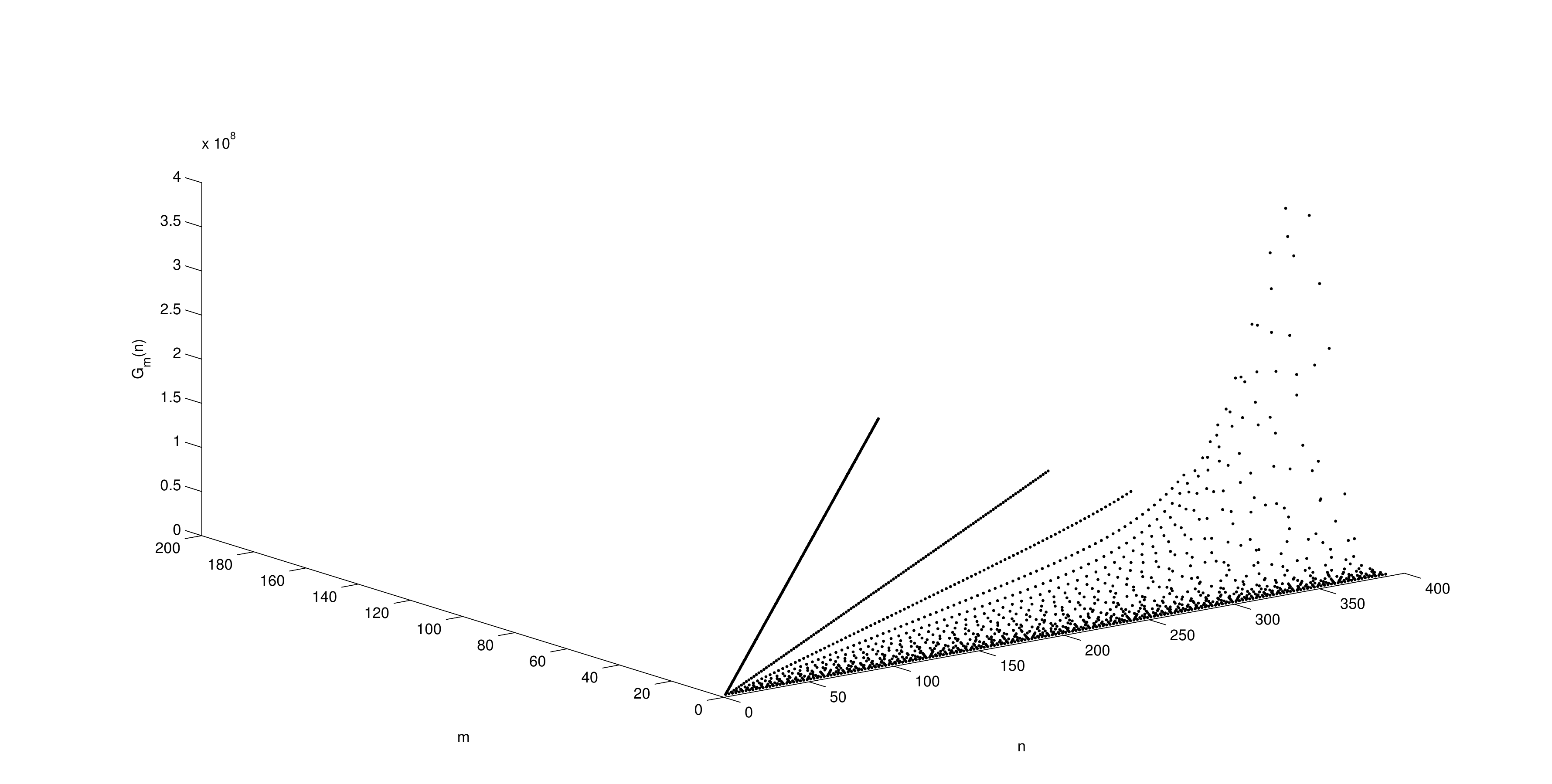}}
  \begin{picture}(0,0)
    \put(15,200){$Y(m,n)$}
  \end{picture}
\caption{\small The function $Y(m,n)$}\label{num}
\end{figure}
\vspace{-1.4in}

\begin{figure}[H]
\begin{minipage}{0.33\linewidth}
\centerline{\includegraphics[width=3.1in]{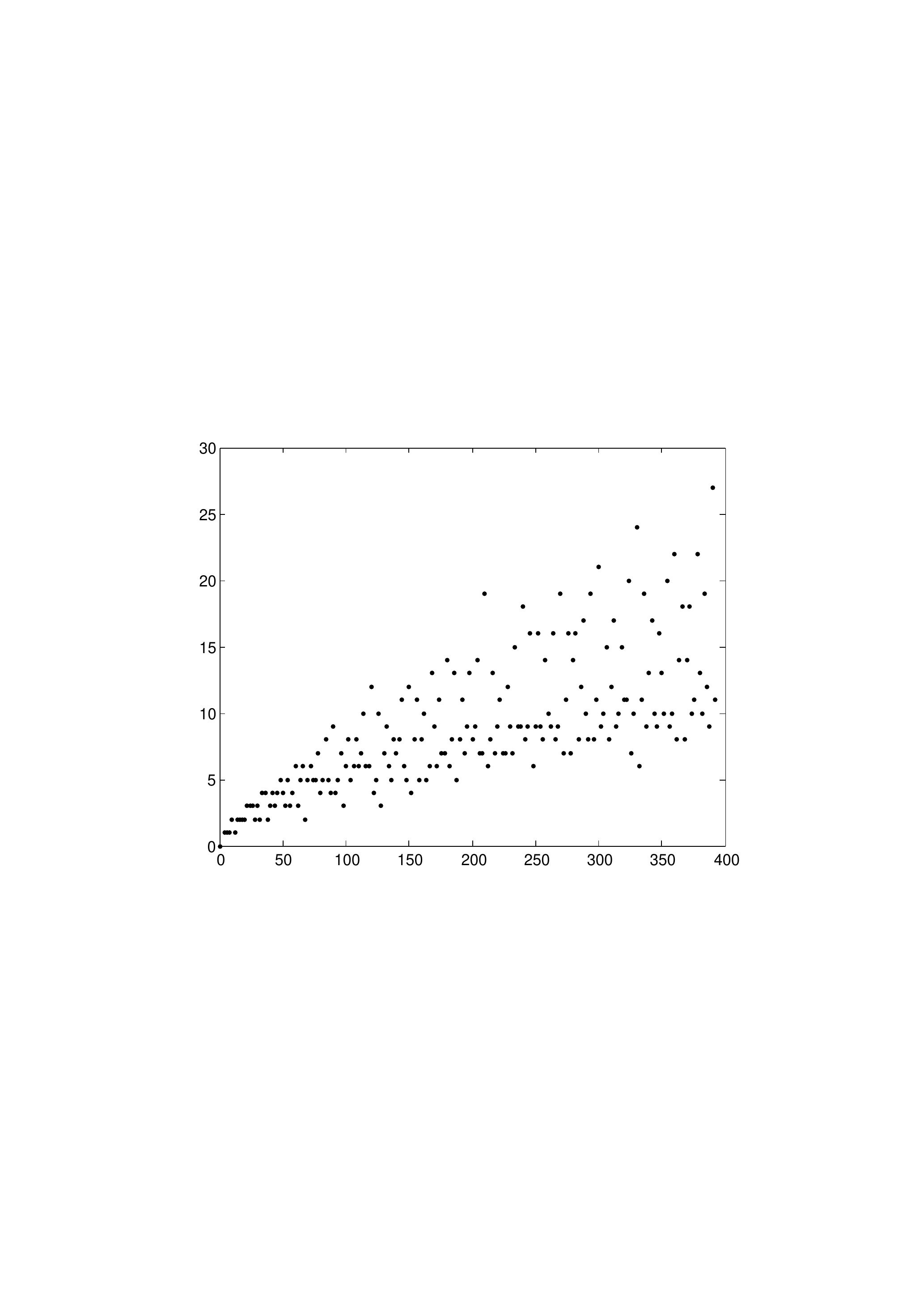}}
\end{minipage}
\hspace{-0.07in}
\begin{minipage}{0.33\linewidth}
\centerline{\includegraphics[width=3.1in]{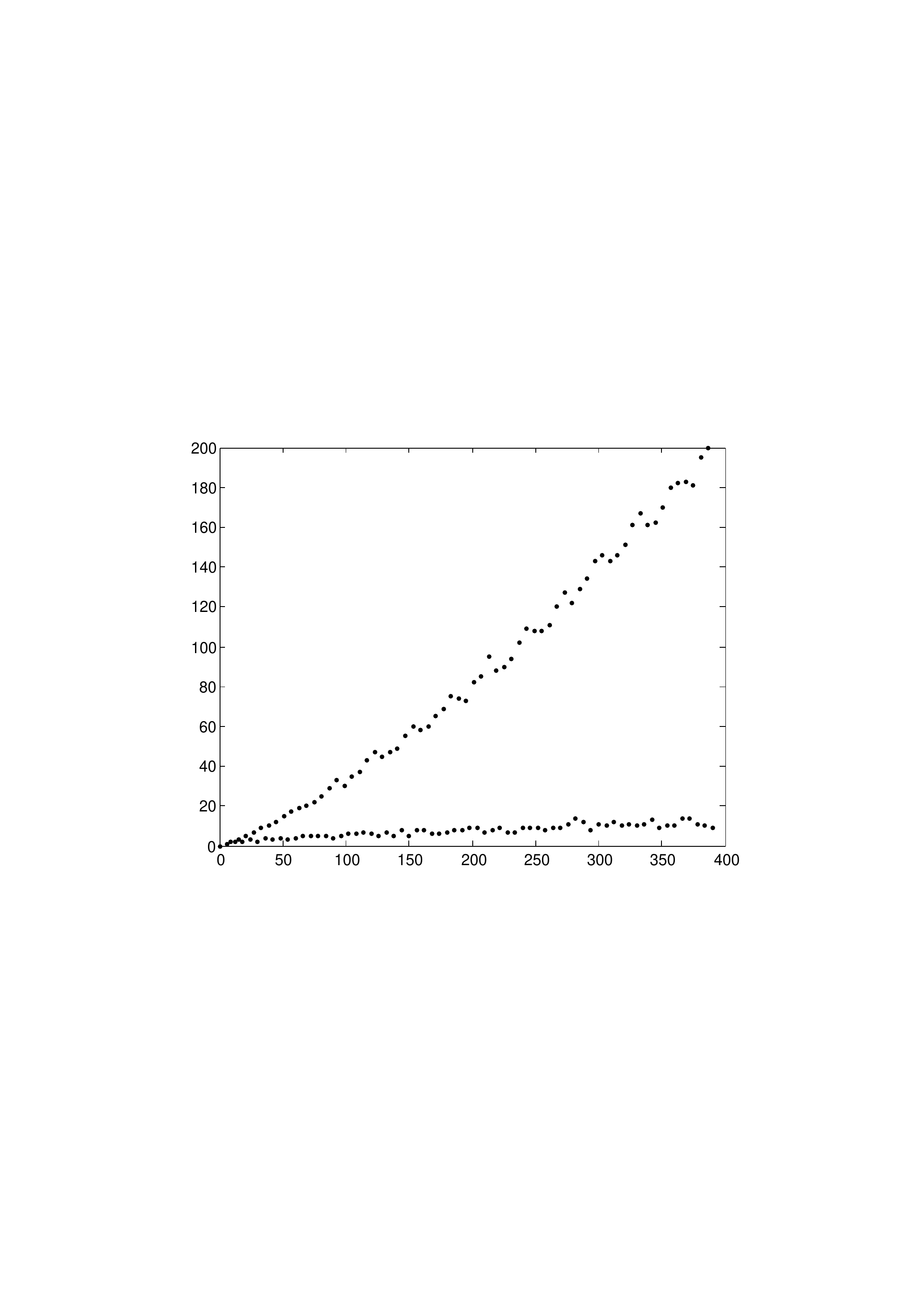}}
\end{minipage}
\hspace{-0.07in}
\begin{minipage}{0.33\linewidth}
\centerline{\includegraphics[width=3.1in]{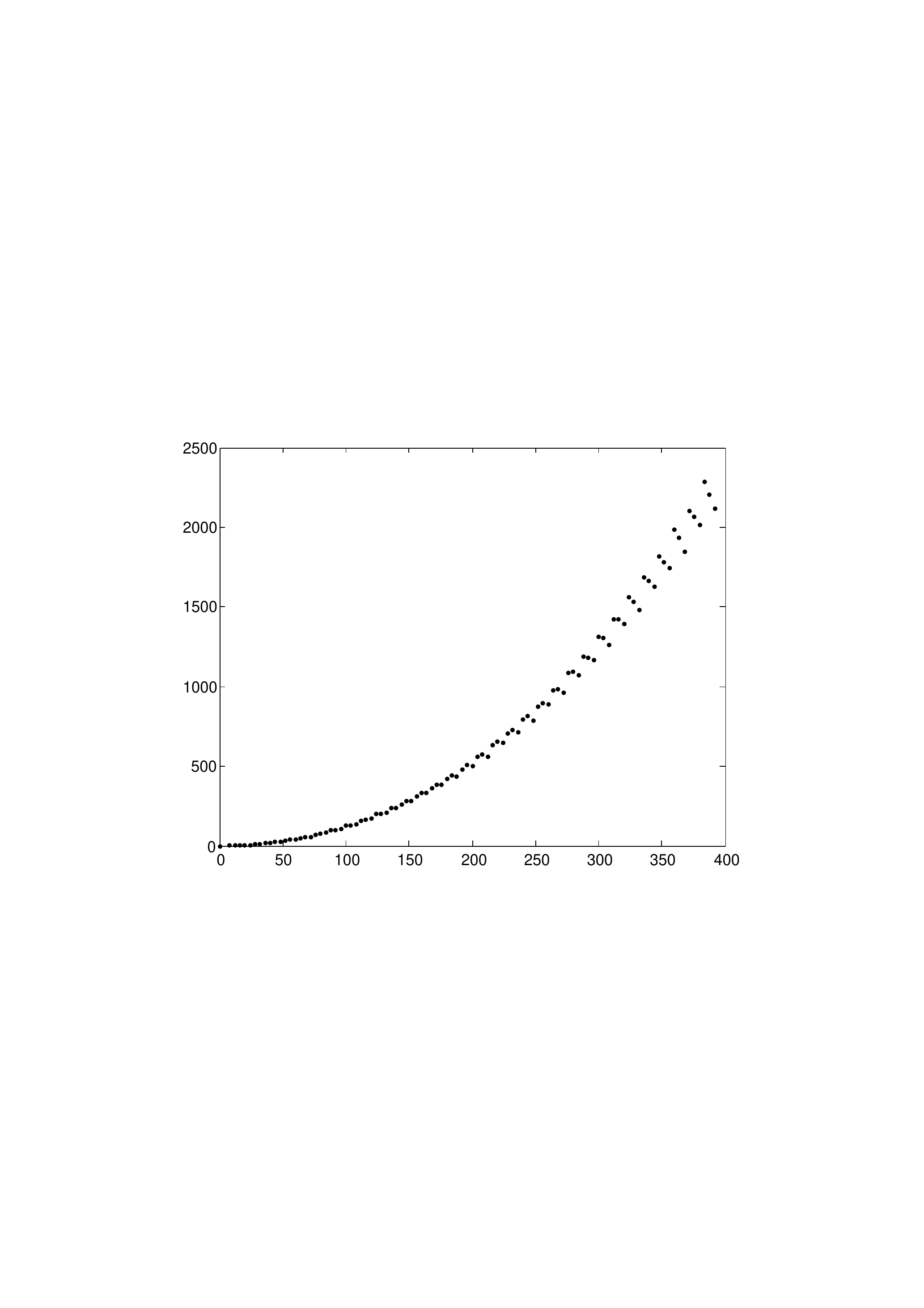}}
\end{minipage}
\vfill
\vspace{-2.7in}
\begin{minipage}{0.33\linewidth}
\centerline{\includegraphics[width=3.1in]{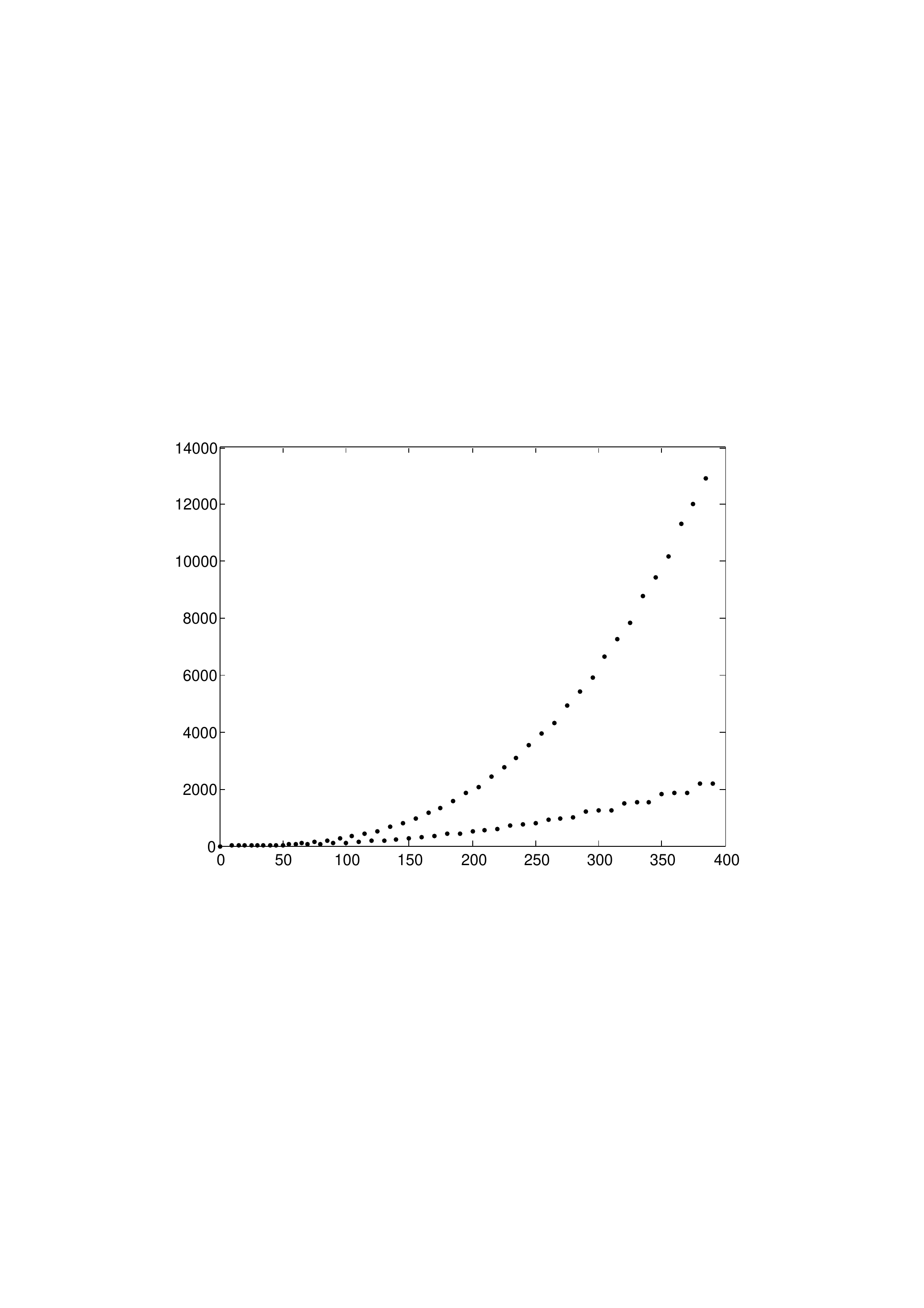}}
\end{minipage}
\hspace{-0.07in}
\begin{minipage}{0.33\linewidth}
\centerline{\includegraphics[width=3.1in]{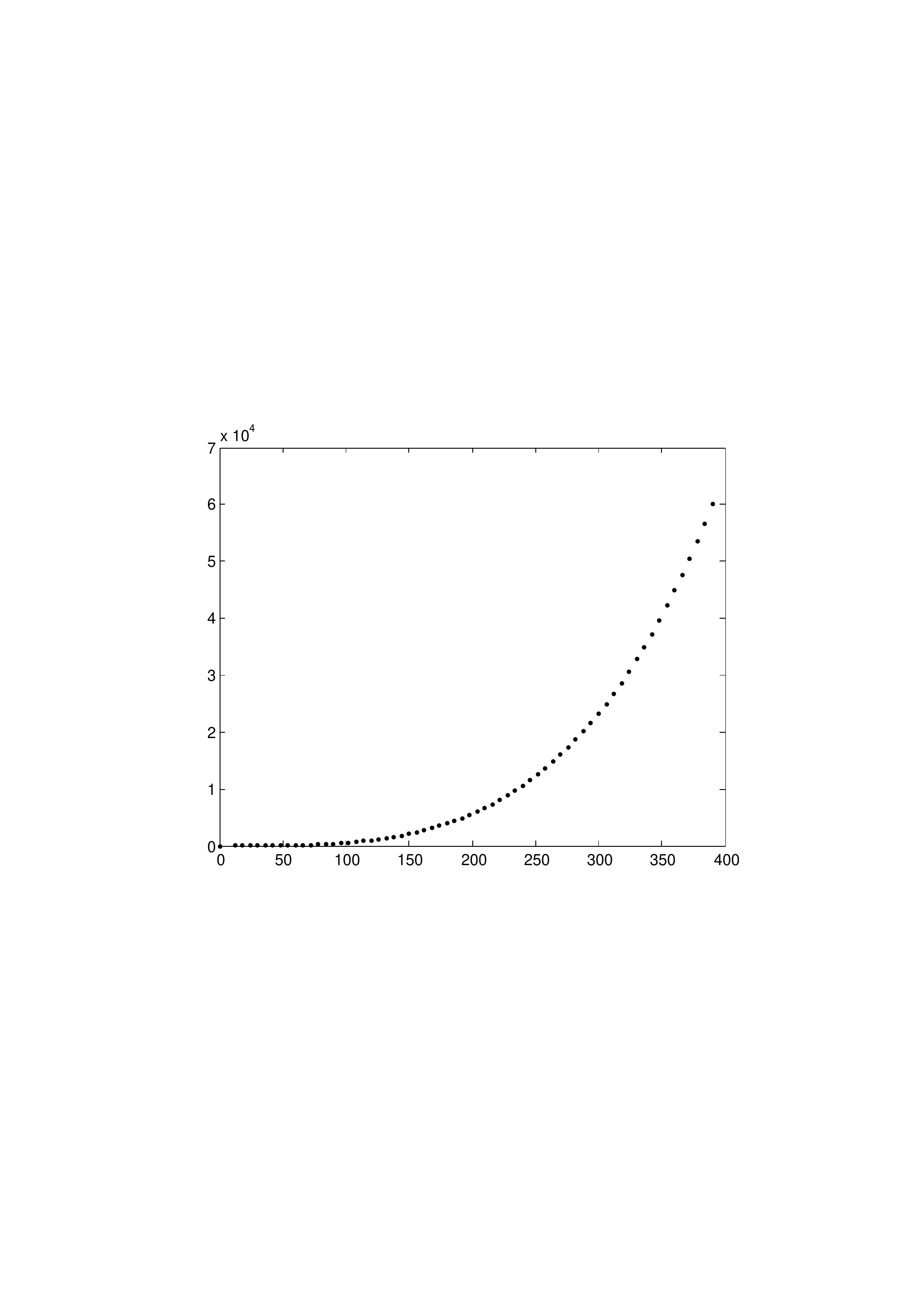}}
\end{minipage}
\hspace{-0.07in}
\begin{minipage}{0.33\linewidth}
\centerline{\includegraphics[width=3.1in]{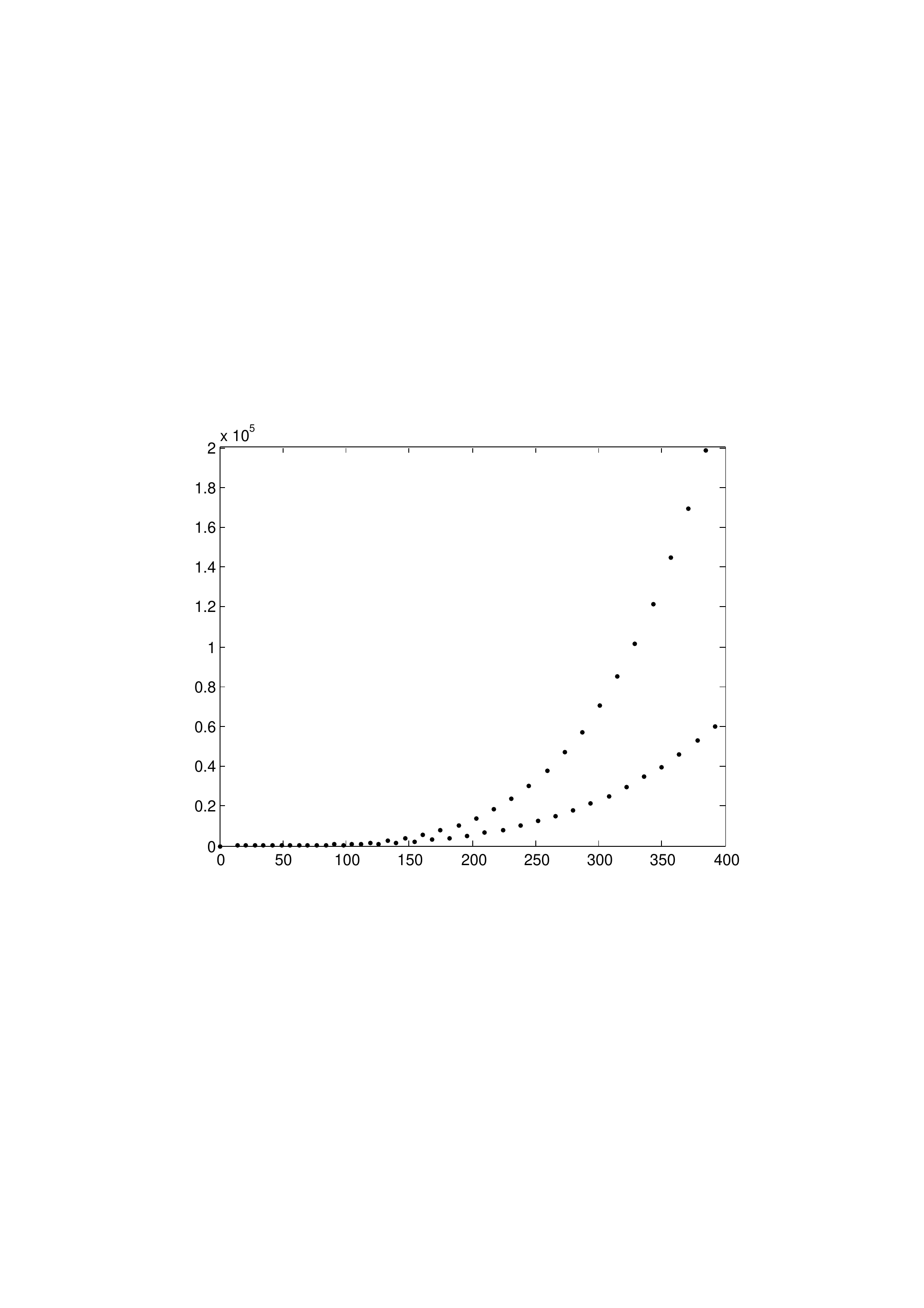}}
\end{minipage}
  \begin{picture}(0,0)
    \put(40,310){$m=2$}
    \put(180,310){$m=3$}
    \put(310,310){$m=4$}
    \put(40,190){$m=5$}
    \put(180,190){$m=6$}
    \put(310,190){$m=7$}
  \end{picture}
  \vspace{-1.3in}
\caption{\small The function $Y(m,n)$ when fix $m$: $n$ up to 400.}\label{diff_m}
\end{figure}

\begin{figure}
\begin{minipage}{0.33\linewidth}
\centerline{\includegraphics[width=3.1in]{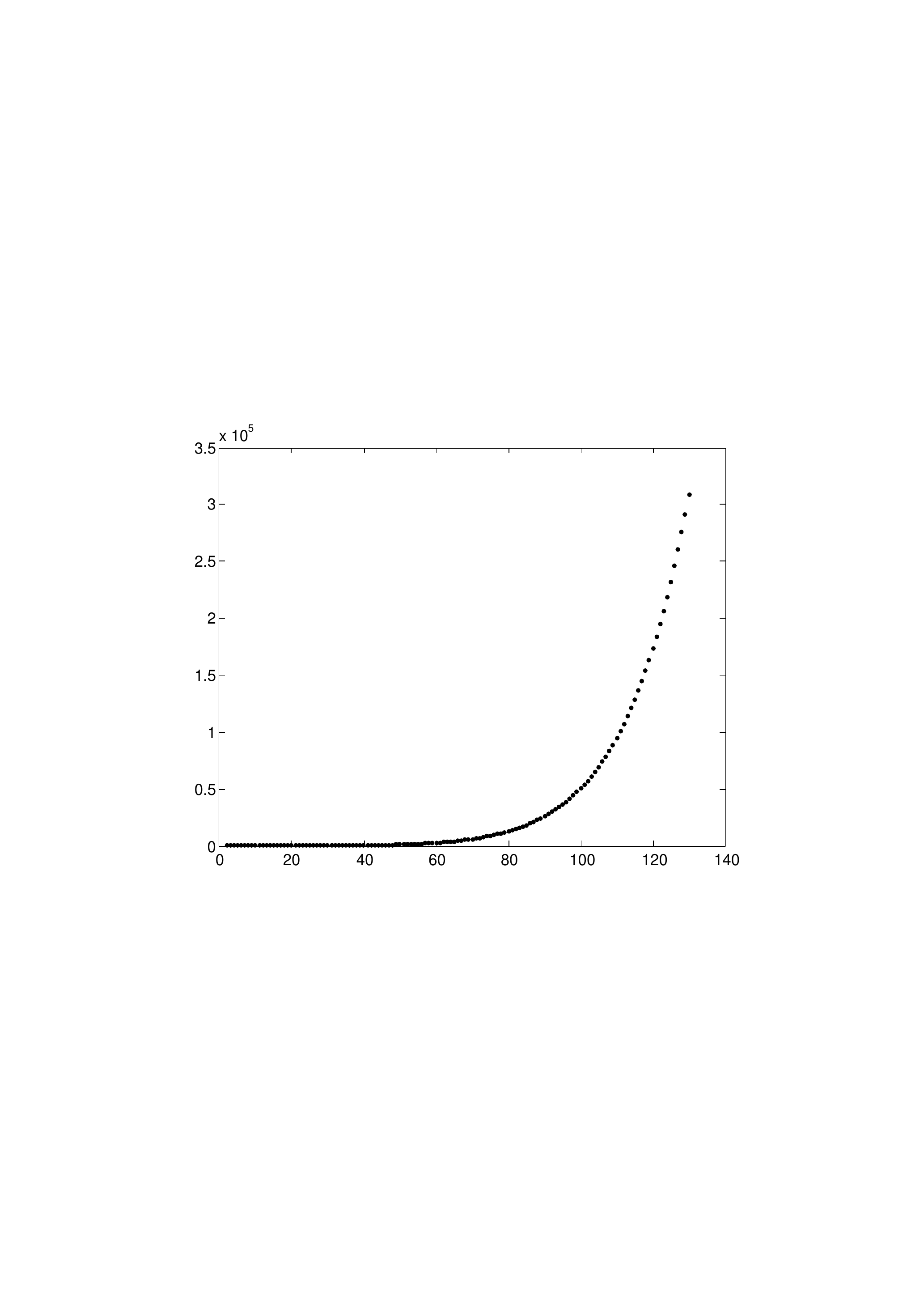}}
\end{minipage}
\hspace{-0.07in}
\begin{minipage}{0.33\linewidth}
\centerline{\includegraphics[width=3.1in]{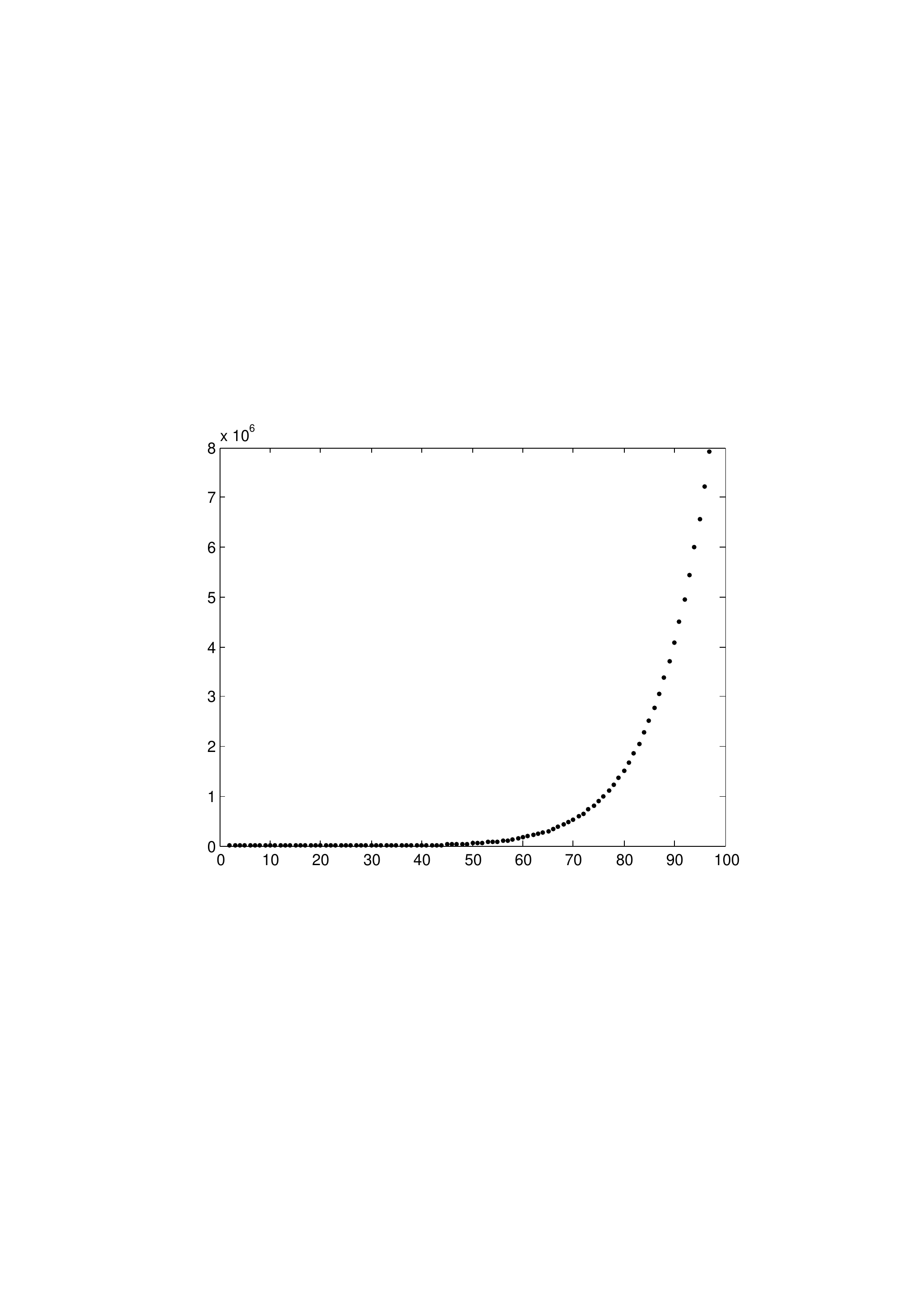}}
\end{minipage}
\hspace{-0.07in}
\begin{minipage}{0.33\linewidth}
\centerline{\includegraphics[width=3.1in]{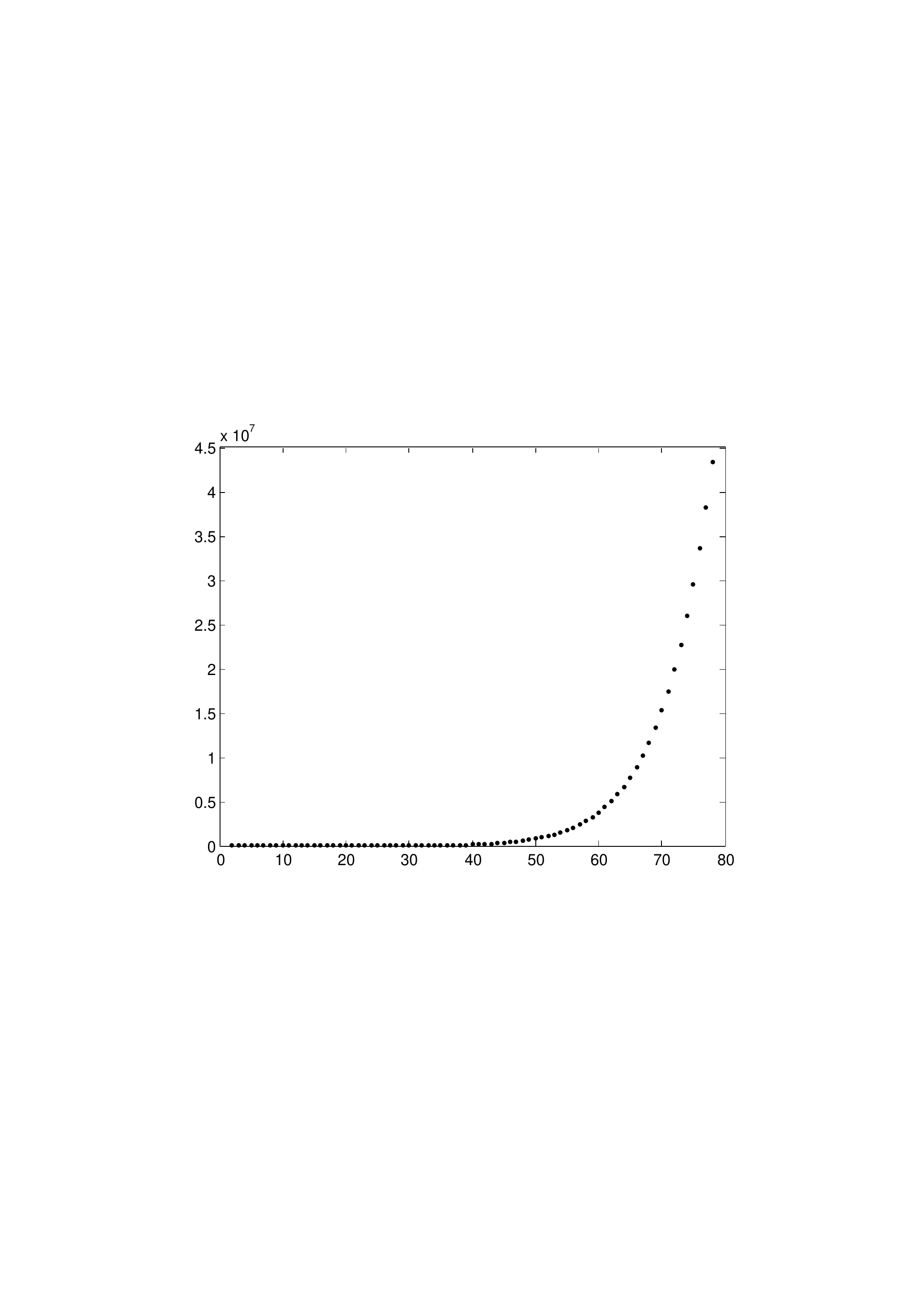}}
\end{minipage}
\vfill
\vspace{-2.7in}
\begin{minipage}{0.33\linewidth}
\centerline{\includegraphics[width=3.1in]{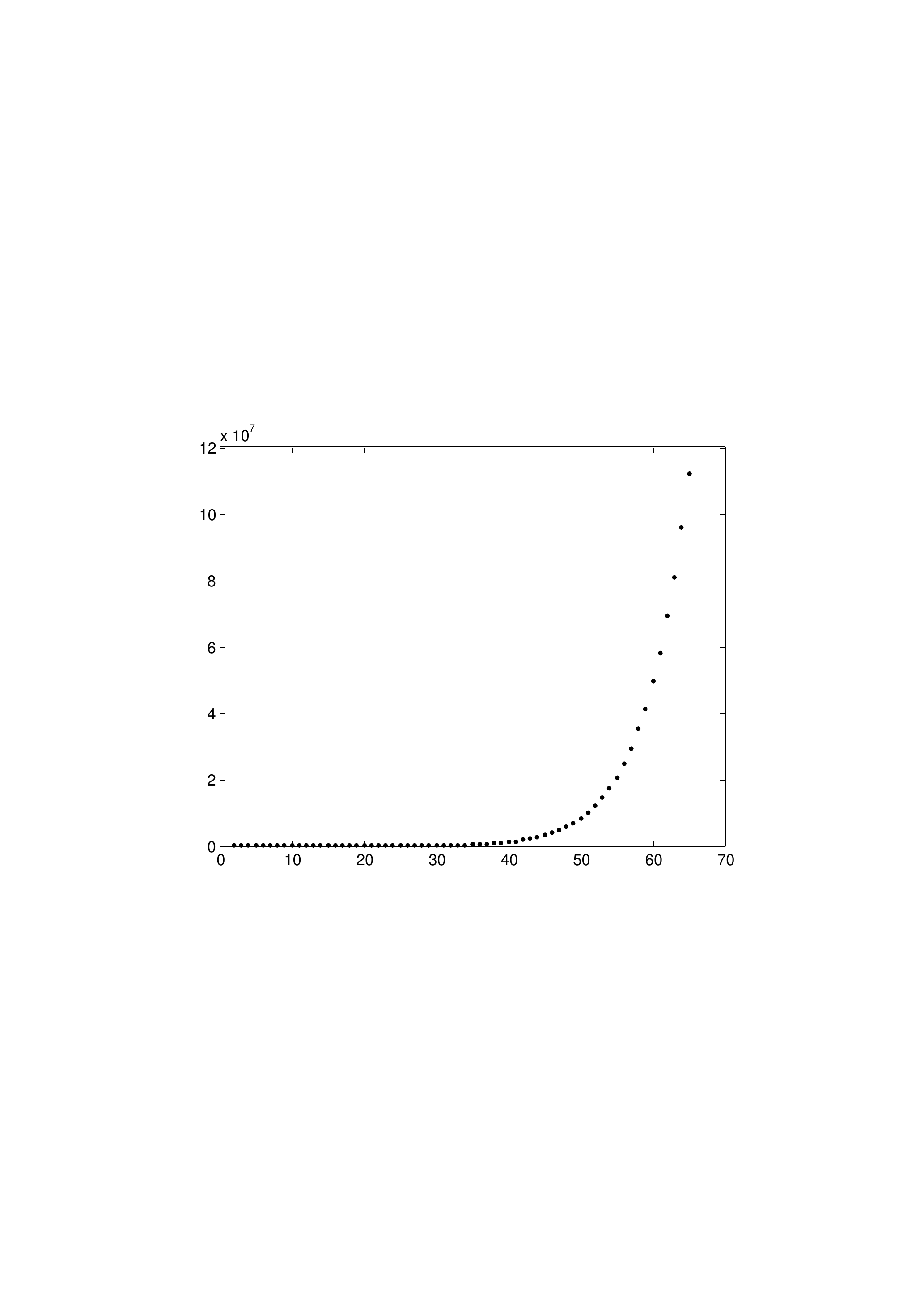}}
\end{minipage}
\hspace{-0.07in}
\begin{minipage}{0.33\linewidth}
\centerline{\includegraphics[width=3.1in]{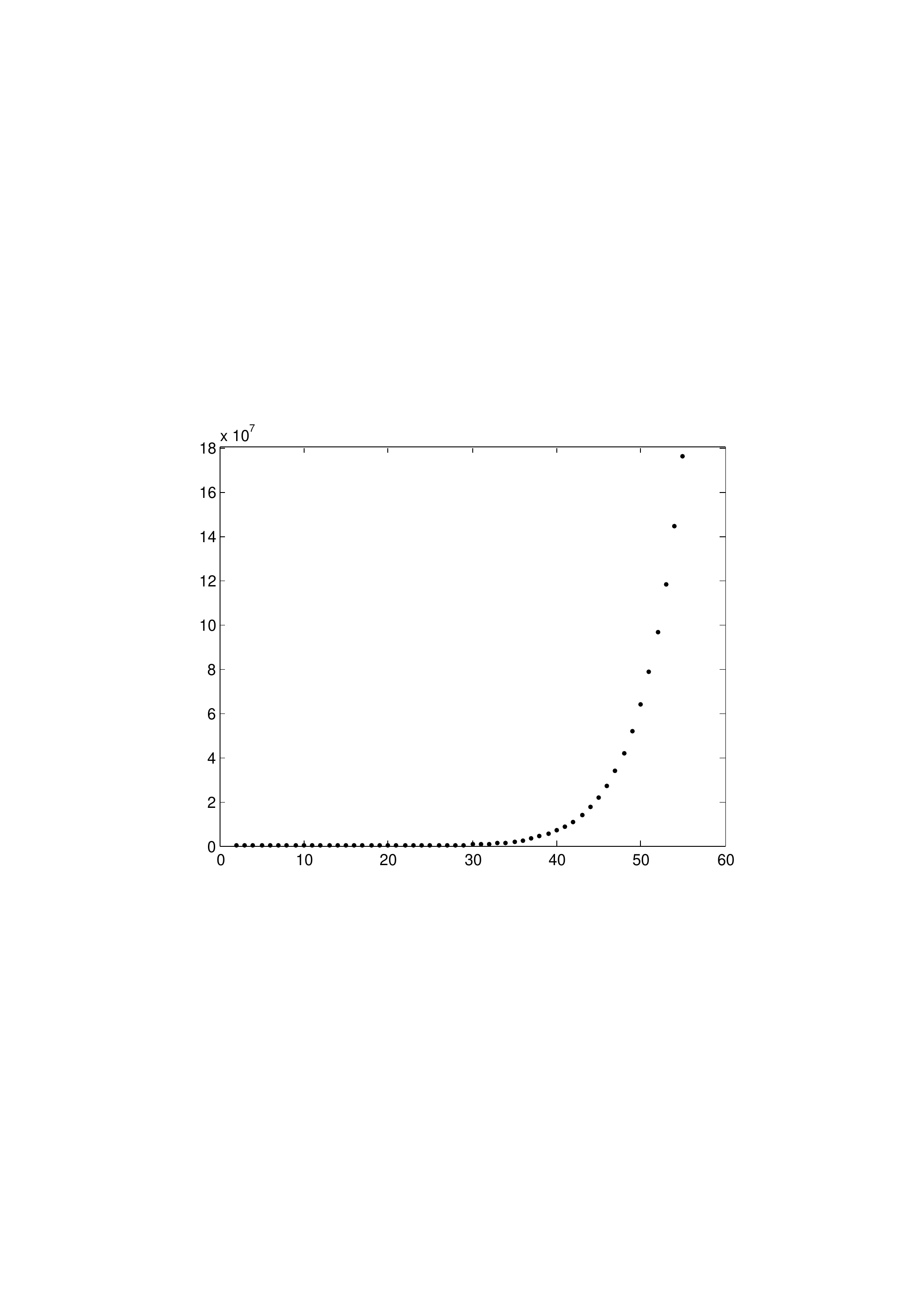}}
\end{minipage}
\hspace{-0.07in}
\begin{minipage}{0.33\linewidth}
\centerline{\includegraphics[width=3.1in]{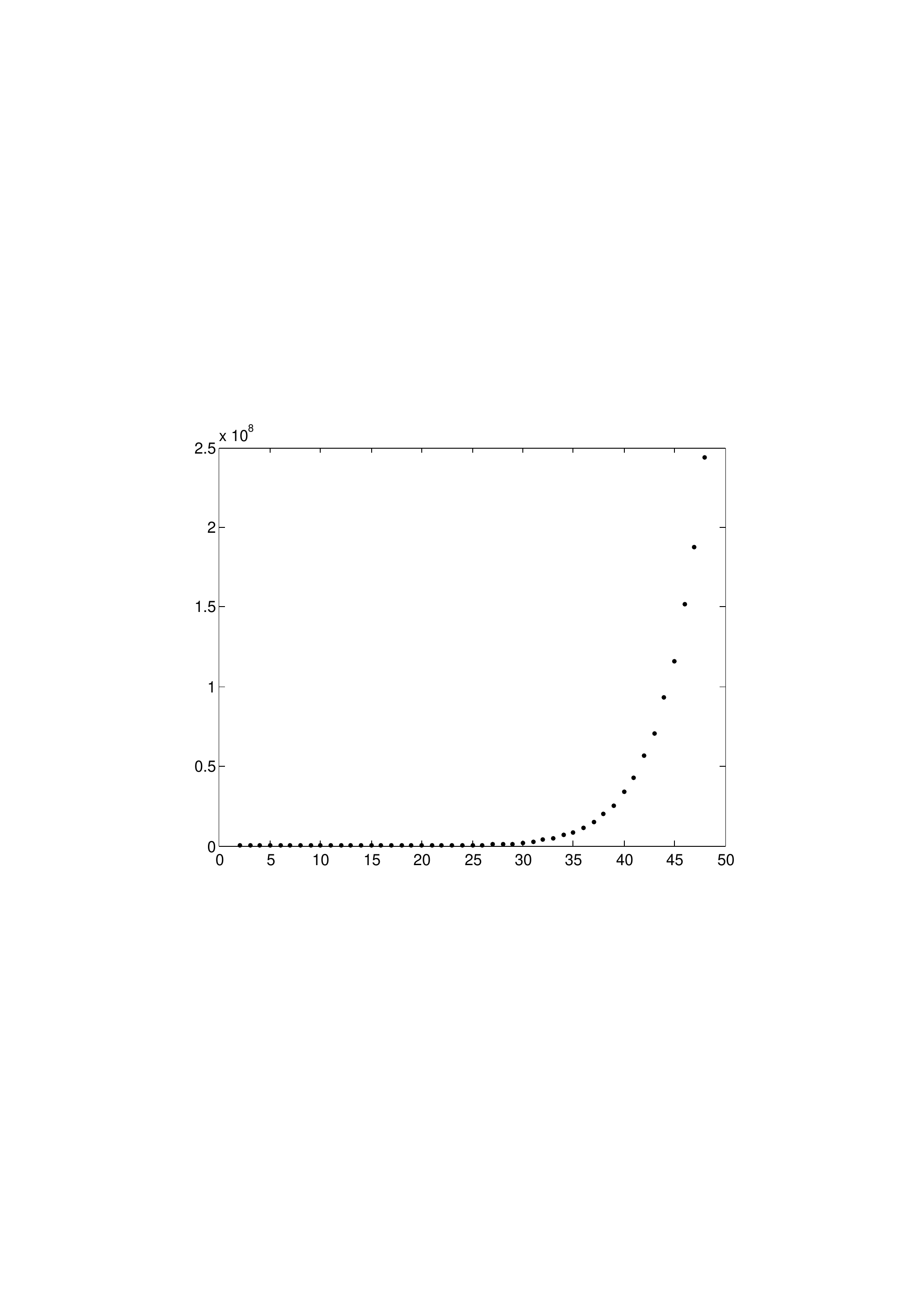}}
\end{minipage}
\vfill
\vspace{-2.7in}
\begin{minipage}{0.33\linewidth}
\centerline{\includegraphics[width=3.1in]{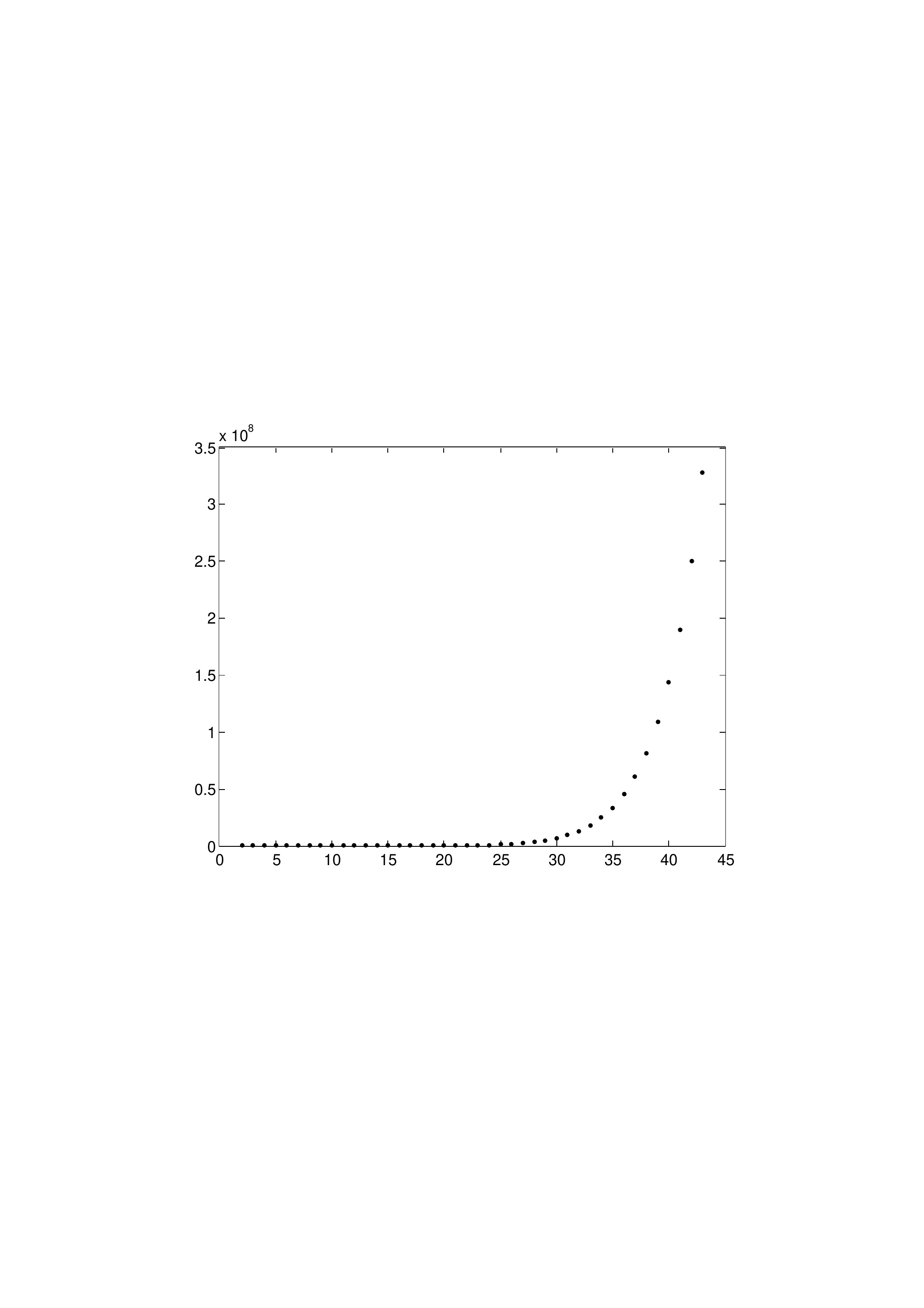}}
\end{minipage}
\hspace{-0.07in}
\begin{minipage}{0.33\linewidth}
\centerline{\includegraphics[width=3.1in]{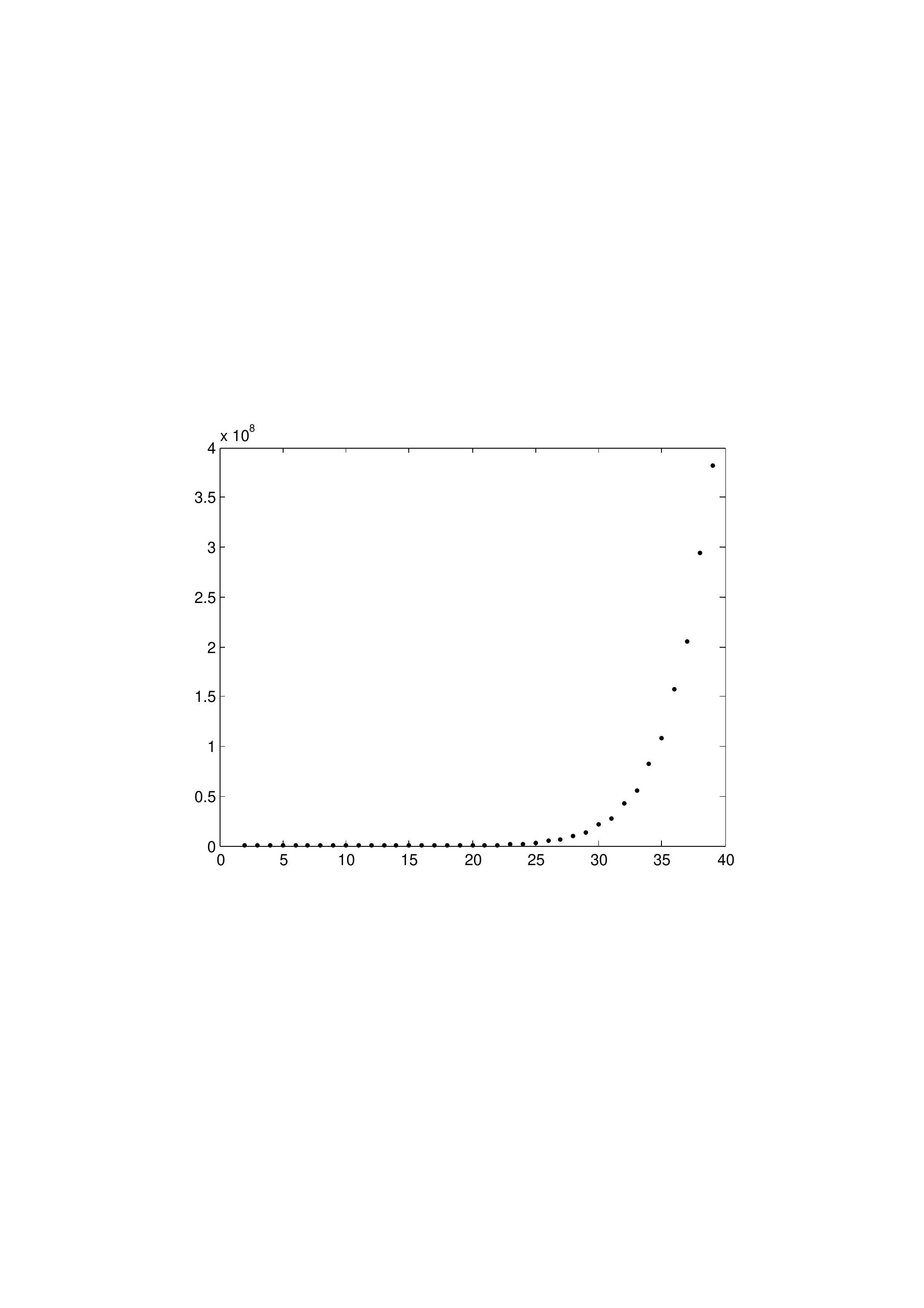}}
\end{minipage}
\hspace{-0.07in}
\begin{minipage}{0.33\linewidth}
\centerline{\includegraphics[width=3.1in]{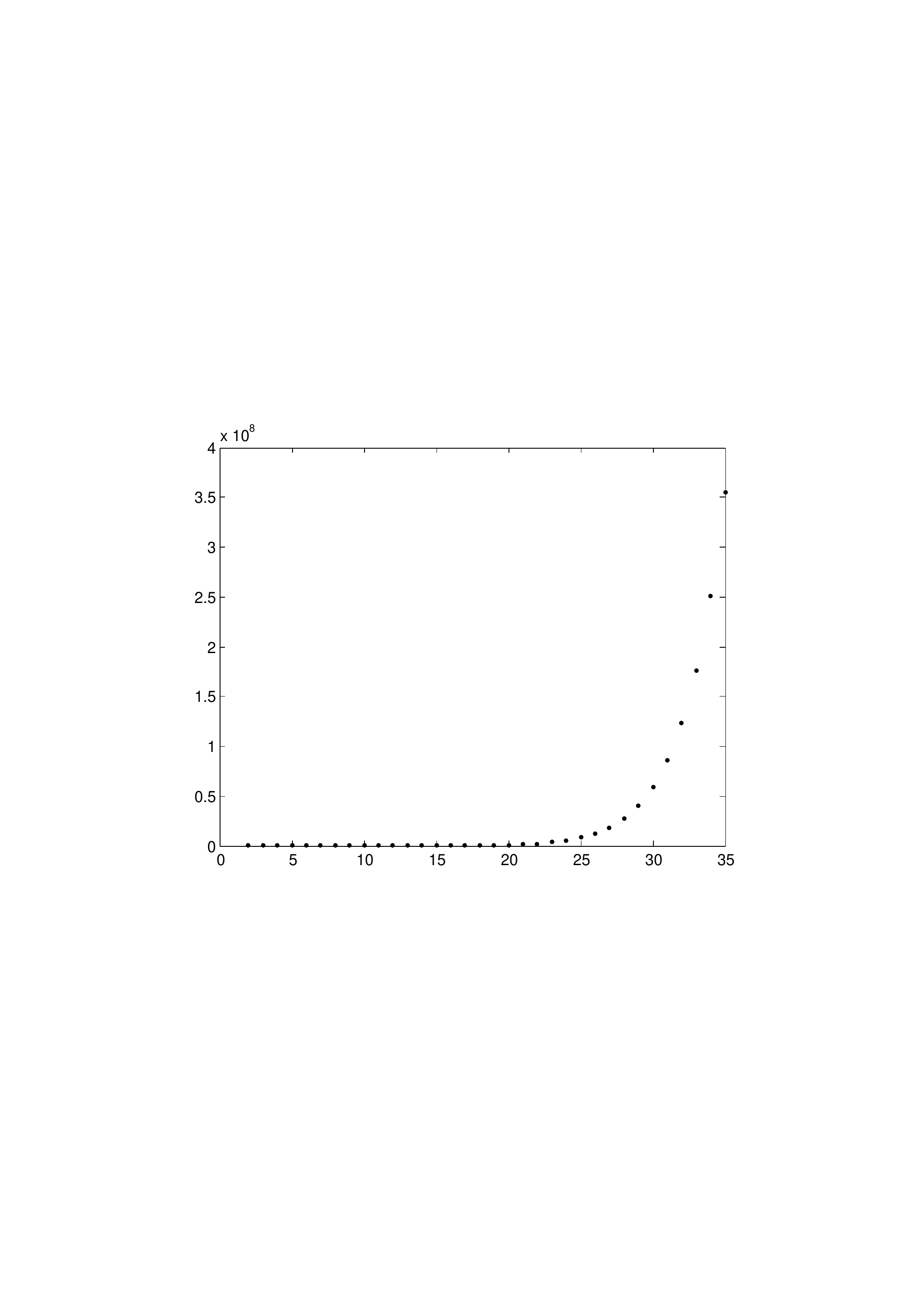}}
\end{minipage}
\begin{picture}(0,0)
    \put(40,430){$k=3$}
    \put(180,430){$k=4$}
    \put(310,430){$k=5$}
    \put(40,310){$k=6$}
    \put(180,310){$k=7$}
    \put(310,310){$k=8$}
    \put(40,190){$k=9$}
    \put(180,190){$k=10$}
    \put(310,190){$k=11$}
  \end{picture}
\vspace{-1.2in}
\caption{\small The function $Y(m,k\cdot m)$ with different $k$}\label{diff_k}
\end{figure}

Obviously, Fig. \ref{diff_k} is more regularity than in Fig. \ref{diff_m}. For the fixed $n/m$ quotient $k$, function $Y(m, km)$ has more obvious rule, which will be researched in the future. And by analyzing the shape of plots in Fig. \ref{diff_k}, the function $Y(m,n)$ is given asymptotically by
\begin{equation}
Y(m,n) \sim C\frac{1}{\pi(m)}e^{c(k)m},
\end{equation}
where $k=n/m$, $c(k)$ is a function with $c(2)=0$, $C$ is a constant. Appendix \uppercase\expandafter{\romannumeral 3} lists some empirical verification results of $Y(m,n)$.

\clearpage

\end{document}